\documentclass[12pt,amstex]{amsart}
\usepackage{stmaryrd}

\usepackage{epsfig}
\usepackage{amsmath}
\usepackage{amssymb}
\usepackage{amscd}
\usepackage{graphicx}

\usepackage{pstricks}

\usepackage{tocvsec2}

\usepackage{hyperref}

\topmargin=0pt \oddsidemargin=0pt \evensidemargin=0pt
\textwidth=15cm \textheight=22.5cm \raggedbottom

\input xy
\xyoption{all}

\newtheorem{thm}{Theorem}[section]

\newtheorem{pro}[thm]{Property}

\newtheorem{prop}[thm]{Proposition}

\theoremstyle{definition}
\newtheorem{de}[thm]{Definition}

\theoremstyle{remark}
\newtheorem{rem}[thm]{Remark}

\numberwithin{equation}{section}

\def \T {\mathbb{T}}
\def \N {\mathbb N}

\def \Z {\mathbb Z}
\def \R {\mathbb R}

\def \F {\mathcal{F}}

\def \a {\alpha }

\def \ep {\epsilon}
\def \d {\delta}

\begin{document}
\title{Affine transformation with zero entropy and nilsystems}

\author{Zhengxing Lian}

\address{Wu Wen-Tsun Key Laboratory of Mathematics, USTC, Chinese Academy of Sciences and
Department of Mathematics, University of Science and Technology of China,
Hefei, Anhui, 230026, P.R. China, and Department of Mathematics, SUNY at Buffalo, Buffalo, NY 14260-2900, U.S.A.}
\email{lianzx@mail.ustc.edu.cn, zhengxin@buffalo.edu}

\subjclass[2010]{Primary: 37B05}

\keywords{Affine transformations; nilsequences; entropy}

\thanks{The author is supported by NNSF of China (11171320, 11571335).}

\begin{abstract}
In this paper, we study affine transformations on tori, nilmanifolds and compact abelian groups. For these systems, we show that an equivalent condition for zero entropy is the orbit closure of each point has a nice structure. To be precise, the affine systems on those spaces are zero entropy if and only if the orbit closure of each point is isomorphic to an inverse limit of nilsystems.
\end{abstract}

\maketitle

\markboth{nilsystem}{Zhengxing Lian}




\section{Introduction}

By a ($\Z$-action) topological dynamical system, one means a pair $(X,T)$, where $X$ is a compact metric space and $T$ is a continuous selfmap of $X$. In this paper, we consider the cases that $X$ is a torus, a nilmanifold or a compact abelian group.

\medskip

For a topological group $G$ and a cocompact subgroup $\Gamma$, an {\em affine transformation} $\tau$ on $G/\Gamma$ is defined by an $\Gamma$-invariant automorphism $A$ of $G$ and a fixed element $g_0\in G$, i.e. $\tau(g\Gamma)=g_0 A(g)\Gamma$. In particular, when $G$ itself is a compact group and $\Gamma$ is trivial, the affine transformation $\tau: G\rightarrow G$ is the map $\tau(g)=g_0 A(g)$, where $g_0\in G, A\in {\rm Aut}(G)$.

\medskip

In this paper, we study affine transformations on tori, nilmanifolds and compact abelian groups, and give an equivalent condition of zero entropy for these systems. Our main result is as follows:

\begin{thm}\label{main conclusion}
Let $(X,\tau)$ be one affine transformation of the following spaces:
\begin{enumerate}
  \item  tori;
  \item  nilpotent manifolds;
  \item  compact abelian groups.
\end{enumerate}
Then $(X,\tau)$ is zero entropy if and only if for each $x\in X$, the closure of the orbit of $x$ with $\tau$ $(\overline{orb(x,\tau)},\tau)$ is isomorphic to an inverse limit of nilsystems. We might call the system $(\overline{orb(x,\tau)},\tau)$ the orbit system.
\end{thm}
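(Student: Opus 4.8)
The plan is to reduce the assertion to a purely linear statement about the defining automorphism $A$, settle that statement with Kronecker's theorem, and then import the structure theory of nilsystems. Write $\tau(g\Gamma)=g_0A(g)\Gamma$, so that the ``linear part'' is the $\Gamma$-invariant automorphism $A$ and the ``translation part'' is $g_0$. The first thing I would record is the entropy reduction $h_{\mathrm{top}}(X,\tau)=h_{\mathrm{top}}(X,A)$: the translation by $g_0$ is a distal perturbation (an isometry on the maximal group-rotation factor) and contributes no entropy. Passing to the Lie algebra $\mathfrak g$ in the torus and nilmanifold cases, or to the Pontryagin dual in the compact abelian case, then reduces the computation of $h_{\mathrm{top}}(X,A)$ to the action of $A$ on this linear datum.

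The ``if'' direction is the soft one. Nilsystems are distal, hence have zero topological entropy, and an inverse limit of zero-entropy systems again has zero entropy; so by hypothesis every orbit closure has zero entropy. I would upgrade this to $X$ via the variational principle: any ergodic invariant measure $\mu$ is carried by the orbit closure of a $\mu$-generic point, on which it must have zero measure-theoretic entropy, whence $h_\mu(X,\tau)=0$ for every invariant $\mu$, and therefore $h_{\mathrm{top}}(X,\tau)=0$.

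The ``only if'' direction is the heart of the matter. Since $A$ preserves $\Gamma$, its action on $\mathfrak g$ (resp.\ on $\h G$) is represented, in a Malcev basis adapted to the lattice (resp.\ on a finitely generated invariant subgroup of the dual), by a matrix in $\mathrm{GL}_d(\Z)$, so its eigenvalues are algebraic integer units. Zero entropy forces all these eigenvalues into the closed unit disk, and the constraint $|\det A|=1$ then forces them onto the unit circle; Kronecker's theorem now yields that every eigenvalue is a root of unity, i.e.\ $A$ is quasi-unipotent. After replacing $\tau$ by a suitable power I may assume $A$ unipotent, so that the orbit $\tau^n(x)=A^n x+(\sum_{i<n}A^i)g_0$ is a polynomial sequence of bounded degree; by Weyl--Leibman equidistribution its closure is a sub-nilmanifold on which $\tau$ acts as a nilrotation, identifying $(\overline{orb(x,\tau)},\tau)$ as a nilsystem in the torus and nilmanifold cases. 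For a general compact abelian group I would write $G$ as the inverse limit of its finite-dimensional $A$-invariant Lie quotients, apply the torus conclusion at each finite stage, and assemble the pieces into an inverse limit of nilsystems.

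The main obstacle I anticipate is precisely this last step: producing, from a unipotent affine map, an explicit nilmanifold, lattice, and nilrotation realizing each orbit closure, and then making the inverse-limit passage uniform and compatible across the finite-dimensional quotients in the compact abelian case, so that the local nilsystem structures glue to a genuine inverse limit.
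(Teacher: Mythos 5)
Your proposal reaches the theorem by a route that is genuinely different from the paper's on the hard direction. The two arguments share the first step (integrality plus zero entropy puts the eigenvalues of the linear part on the unit circle, and Kronecker's lemma, Theorem \ref{roots}, makes $A$ quasi-unipotent), but from there the paper never realizes orbit closures as nilmanifolds directly. Instead it computes return-time sets: for tori (Proposition \ref{tori}) and for nilmanifolds (via the $L$-polynomial-degree induction of Proposition \ref{finite degree of homomorphism of nilpotent group}) it shows that each $N(x,U)$ contains generalized-polynomial return sets of uniformly bounded degree, hence lies in $\F_{GP_d}=\F_{d,0}$ (Theorem \ref{F_d=F_GPd}); the Huang--Shao--Ye characterization (Theorem \ref{main conclusion fo HSY}) then says every point of the minimal orbit closure is a $d$-step AA point, so by Theorem \ref{nilfactor} the orbit closure coincides with its maximal $d$-step nilfactor. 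Your plan---convert the unipotent affine map into a nilrotation and invoke Weyl--Leibman equidistribution---is the classical alternative: it is more geometric and exhibits the nilsystem explicitly, whereas the paper's route avoids any semidirect-product or equidistribution input at the cost of the combinatorial computation with generalized polynomials. Your variational-principle argument for the easy direction is also fine; the paper argues via distality instead, and both are correct.

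Two steps you treat as routine are, however, exactly where nontrivial input is needed. First, in the compact abelian case you say you would ``write $G$ as the inverse limit of its finite-dimensional $A$-invariant Lie quotients.'' The existence of such a chain of $A$-invariant closed subgroups with Lie quotients is not a formality of Pontryagin duality: the $\hat A$-invariant subgroup of the dual generated by finitely many characters is usually not finitely generated as an abelian group, and for a positive-entropy automorphism (the shift on $(\Z/2\Z)^{\Z}$, say, whose dual $\bigoplus_{\Z}\Z/2\Z$ has no nontrivial finite shift-invariant subgroup) no such chain exists at all. This is precisely Schmidt's theorem \cite{Schmidt}, quoted as Property \ref{Lie chain} in the paper, and it is valid only because of the zero-entropy hypothesis; without citing or proving it, your inverse-limit assembly never starts. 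Second, the unipotent reduction: Theorem \ref{main conclusion} concerns $\tau$-orbit closures, while after replacing $\tau$ by $\tau^{b}$ you only control $\tau^{b}$-orbit closures; $\overline{orb(x,\tau)}$ is a finite union of $\tau^{b}$-orbit closures permuted cyclically by $\tau$, and one must still argue that this union, together with the permutation, is (an inverse limit of) nilsystems---this is the gluing of the systems $(Y_r,S_r)$ carried out in the paper's proof of Theorem \ref{iso of torus and nilmanifold}. Relatedly, Leibman's theorem applies to polynomial sequences of translations $g(n)\Gamma$, and the affine orbit $\tau^{n}(x\Gamma)=g_0A(g_0)\cdots A^{n-1}(g_0)A^{n}(x)\Gamma$ is not of that form on $G/\Gamma$ itself; one first needs to embed $(G/\Gamma,\tau^{b})$ as an invariant sub-nilmanifold of the nilmanifold of $G\rtimes_{A^{b}}\R$, which is a nilpotent group precisely because $A^{b}$ is unipotent. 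You flag this realization as the anticipated obstacle, and correctly so; but it is a construction the proof must actually contain, not merely anticipate.
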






As an application, we show Sarnak conjecture holds for the zero entropy affine transformations on tori, nilmanifolds and compact abelian groups, which is first proved in \cite{LS}. According to Green and Tao's result\cite{GT2012-2}(i.e.M$\ddot{o}bius function$ is orthogonal with nilsequence), we give different methods to show this.

\medskip

The paper is organized as follows:
In Section \ref{section-pre}, we introduce some basic conceptions and results needed in this paper.
In Section \ref{section-tori}, we prove Theorem \ref{main conclusion} by three different cases. To prove this theorem, we try to analysis the returning time of each point of $X$. Then we use some theorems in \cite{HSY} to prove our results.

\bigskip

\noindent {\bf Acknowledgments.}
The author would like to thank Wen Huang, Song Shao and Xiangdong Ye for introducing him this topic and their constant support.

\section{Preliminaries}\label{section-pre}

In this section, we introduce some basic definitions and results needed in this paper.

\subsection{Topology entropy}

Let $(X,\tau)$ be a topological dynamical system, where $\tau:X\rightarrow X$ is continuous, $X$ is a compact metric space with metric $\rho$. Let $\rho_n(x,y)=\max_{0\leq i\leq n-1}\rho(\tau^ix,\tau^iy)$. A set $E\subset X$ is $(n,\ep)$-$separated$ if for any $x\neq y\in E$, $\rho_n(x,y)>\ep$. Denote by $N(n,\ep)$ the maximum cardinality of an $(n,\ep)$-separated set. The {\em topological entropy} of $(X,\tau)$ is defined by
$$h_{top}(X,\tau)=\lim_{\ep\rightarrow 0}(\limsup_{n\rightarrow \infty}\frac{1}{n}\log N(n,\ep))$$
Let $\pi:(X,\tau)\rightarrow (Y,S)$ be a factor map, i.e. $\pi$ is a continuous surjection and $\pi \circ \tau=S\circ \pi$. Then $h_{top}(X,\tau)\geq h_{top}(Y,S)$.

\medskip

In this paper, we will need the following famous result:
\begin{thm}\label{entropy of torus}\cite{Sinai}
Let $D$ be a homomorphism of a $d$-dim torus. $\{\lambda_1,\ldots,\lambda_d\}$ are the eigenvalues of $D$. Then the entropy of $D$ is:
$$h(\T^d,D)=\sum_{j=1}^d \log|\lambda_j|_{+},$$
where $\log |\lambda_j|_{+}=\max\{\log|\lambda_j|,0\}$.
\end{thm}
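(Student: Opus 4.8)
The plan is to prove this classical formula by reducing the dynamics to the linear algebra of the lifted integer matrix and then controlling the volumes of Bowen balls. A homomorphism $D$ of $\T^d=\R^d/\Z^d$ lifts to an integer matrix $M\in M_d(\Z)$ acting on $\R^d$, and the covering projection $p:\R^d\to\T^d$ is a local isometry for the flat metric; hence for $\ep$ smaller than the injectivity radius, every Bowen ball $B_n(x,\ep)=\{y:\rho_n(x,y)\le\ep\}$ lifts to a neighborhood of a lift $\tilde x$ on which $D^i$ is exactly $M^i$. So all metric estimates reduce to the linear map $M$ on $\R^d$. I first fix an adapted norm: decomposing $\R^d$ into the real generalized eigenspaces of $M$ and applying a Lyapunov-norm construction, for every $\eta>0$ I obtain a norm in which $M$ expands each expanding block ($|\lambda_j|>1$) by a factor in $[|\lambda_j|-\eta,|\lambda_j|+\eta]$, contracts each contracting block, and lets the central blocks ($|\lambda_j|=1$) grow at most polynomially. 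A complex conjugate pair is handled through its $2$-dimensional real block of expansion modulus $|\lambda_j|$ and contributes $2\log|\lambda_j|$, matching two equal terms in the sum; non-diagonalizable Jordan blocks introduce only polynomial factors in $n$, invisible to $\frac1n\log(\cdot)$.

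The core of the argument is a two-sided estimate for the volume of a Bowen ball. In the adapted norm the constraint defining $B_n(\tilde x,\ep)$ is binding, in each expanding direction $j$, at the last iterate $i=n-1$, forcing extent $\asymp\ep\,|\lambda_j|^{-(n-1)}$; in the contracting directions the binding constraint is at $i=0$, giving extent $\lesssim\ep$; and in the central directions the extent is $\ep$ up to polynomial corrections. Thus $B_n(\tilde x,\ep)$ is squeezed between two parallelepipeds whose volumes agree up to a subexponential factor, giving $\mathrm{vol}\big(B_n(\tilde x,\ep)\big)=\ep^{d}\,e^{o(n)}\prod_{|\lambda_j|>1}|\lambda_j|^{-n}$, uniformly in $\tilde x$.

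Both bounds on entropy now follow from the standard comparison between separated/spanning sets and Bowen-ball volumes on the unit-volume torus. For the upper bound, the half-balls $B_n(x,\ep/2)$ around the points of a maximal $(n,\ep)$-separated set are pairwise disjoint, so $N(n,\ep)\le 1/\min_x\mathrm{vol}(B_n(x,\ep/2))\le e^{o(n)}\ep^{-d}\prod_{|\lambda_j|>1}|\lambda_j|^{n}$. For the lower bound, a minimal $(n,\ep)$-spanning set covers $\T^d$ by Bowen $\ep$-balls, so $N(n,\ep)\ge 1/\max_x\mathrm{vol}(B_n(x,\ep))\ge e^{o(n)}\ep^{-d}\prod_{|\lambda_j|>1}|\lambda_j|^{n}$. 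Applying $\frac1n\log$, letting $n\to\infty$, and then $\ep\to 0$, both sides converge to $\sum_{|\lambda_j|>1}\log|\lambda_j|=\sum_{j=1}^d\log|\lambda_j|_{+}$, which is exactly $h(\T^d,D)$.

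The step I expect to be the main obstacle is the two-sided volume estimate, and in particular making the geometry of the thin Bowen parallelepipeds rigorous: controlling the mixing of expanding, central, and contracting directions through the adapted norm, and verifying that the central-direction and Jordan-block distortions are genuinely absorbed into the $e^{o(n)}$ factor. Notably, routing the lower bound through the intrinsic volume of the torus (covering number) rather than through a direct packing of stretched unstable pieces is what dissolves the wrapping-modulo-the-lattice difficulty, since exponentially many orbits become separated at \emph{different} times. As an alternative lower bound kept in reserve, one could invoke the variational principle, $h_{top}\ge h_{\mathrm{Haar}}$, together with the computation that the measure-theoretic entropy of $D$ with respect to Haar measure equals the sum of its positive Lyapunov exponents $\sum_{j}\log|\lambda_j|_{+}$.
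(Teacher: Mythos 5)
The paper itself gives no proof of this statement: it is quoted as a classical result and attributed to \cite{Sinai}, then used as a black box, so your proposal can only be judged against the standard argument in the literature. Your skeleton is exactly that standard route --- lift $D$ to an integer matrix $M$, estimate the volume of Bowen balls through an adapted Lyapunov norm (conjugate pairs giving $2\log|\lambda_j|$, Jordan blocks absorbed into $e^{o(n)}$ factors), then get the entropy upper bound from disjointness of the $(n,\ep/2)$-balls centered at a maximal separated set, and the lower bound from the fact that a maximal separated set is spanning, so its Bowen $\ep$-balls cover $\T^d$. All of that is correct.

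The one genuine gap is the very first reduction, and your own diagnosis of where the danger lies is not right. ``Local isometry plus $\ep$ smaller than the injectivity radius'' does \emph{not} imply that an $n$-step Bowen ball lifts to a set on which every $D^i$, $i<n$, agrees with $M^i$: closeness on the torus at time $i$ only says $\mathrm{dist}\big(M^i(\tilde y-\tilde x),\Z^d\big)\le\ep$, and a priori the nearest lattice vector $k_i$ may be nonzero and vary with $i$; this is precisely the wrapping problem. Nor is it ``dissolved'' by routing the lower bound through covering numbers: the entropy lower bound requires an \emph{upper} bound on the volume of the torus Bowen ball, and that is exactly where wrapping must be excluded (the entropy upper bound only needs the easy inclusion of the projected linear ball inside the torus Bowen ball). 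The missing step is short but must be said: take $\ep<\big(1+\|M\|_\infty\big)^{-1}$ (a condition on $\|M\|$, not on the injectivity radius) and induct on $i$: if $\|M^i(\tilde y-\tilde x)\|_\infty\le\ep$ and $\mathrm{dist}\big(M^{i+1}(\tilde y-\tilde x),\Z^d\big)\le\ep$, then the nearest lattice vector to $M^{i+1}(\tilde y-\tilde x)$ has norm at most $\|M\|_\infty\ep+\ep<1$, hence equals $0$, so $\|M^{i+1}(\tilde y-\tilde x)\|_\infty\le\ep$. With this lemma the torus Bowen ball is exactly a translate of the linear Bowen ball of $M$ (in particular its volume does not depend on the center), and your two-sided volume estimate and both counting bounds go through verbatim. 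Your fallback via the variational principle and $h_{\mathrm{Haar}}=\sum_j\log|\lambda_j|_{+}$ is legitimate in principle, but that identity is essentially the content of Sinai's theorem itself, so invoking it is closer to a citation than to a proof.
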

\subsection{Nilsystem}
\subsubsection{Nilpotent groups} Let $G$ be a group. For $g,h\in G$, we write $[g,h]=ghg^{-1}h^{-1}$ for the commutator of $g$ and $h$ and we write $[A,B]$ for the subgroup spanned by $\{[a,b]:a\in A,b\in B\}$. The commutator subgroup $G_j,j\geq 1$, are defined inductively by setting $G_1=G$ and $G_{j+1}=[G_j,G]$. Let $d\geq 1$ be an integer. We say that $G$ is $d$-$step\  nilpotent$ if $G_{d+1}$ is the trivial subgroup.

\subsubsection{Nilmanifolds} Let $G$ be a d-step nilpotent Lie group and $\Gamma$ is a discrete cocompact subgroup of $G$, i.e. a uniform subgroup of $G$. The compact manifold $X=G/\Gamma$ is called a $d$-$step\ nilmanifold$. The group $G$ acts on $X$ by left translations and we write this action as $(g,x)\rightarrow gx$. The Haar measure $\mu$ of $X$ is the unique probability measure on $X$ invariant under this action. Let $g\in G$ and $\tau$ be the transformation $\tau(x)\rightarrow g x$ of $X$. Then $(X,\tau,\mu)$ is called a $d$-$step\ nilsystem$. See \cite{CG} for more details.

\subsubsection{Systems of  order d} We also make use of inverse limits of nilsystems and so we recall the definition of an inverse limit of systems(restricting ourselves to the case of sequential inverse limits). If $(X_i,\tau_i)_{i\in \N}$ are systems with $diam(X_i)\leq M\leq \infty$ and $\phi_i: X_{i+1}\rightarrow X_i$ are factor maps, the $inverse\ limit$ of the systems is defined to be the compact subset of $\Pi_{i\in \N}X_i$ given by $\{(x_i)_{i\in \N}:\phi_i(x_{i+1})=x_i,i\in \N\}$, which is denoted by $\lim_{\leftarrow}\{X_i\}_{i\in \N}$. It is a compact metric space endowed with the distance $\rho(x,y)=\sum_{i\in \N}\frac{1}{2^i}\rho_i(x_i,y_i)$, where $\rho_i$ is the distance of $(X_i,\tau_i)$. We note that the maps $\{\tau_i\}$ induce a transformation $\tau$ on the inverse limit.
\begin{de}\cite{HKM}\label{d-step minimial nilsystems}
 A system $(X,\tau)$ is called a $system\ of \ order\ d$, if it is an inverse limit of $d$-step minimal nilsystems.
\end{de}

\subsection{Regionally proximal relation}

Let $(X,\tau)$ be a topological dynamical system. The {\em regionally proximal relation} $RP=RP(X,\tau)\subset X\times X$ is the set which contains all the point $(x,y)$ if there are sequences $x_i,y_i\in X,n_i\in \Z$ such that $x_i\rightarrow x,y_i\rightarrow y$ and $(\tau\times \tau)^{n_i}(x_i,y_i)\rightarrow (z,z),i\rightarrow \infty$, for some $z\in X$.

\begin{de}\cite{HKM1}
Let $(X,\tau)$ be a topological dynamical system with metric $\rho$. Let $d\geq 1$ be an integer. A pair $(x,y)\in X\times X$ is said to be \textit{regionally proximal of order d} if for any $\delta>0$, there exist $x',y'\in X$ and a vector $n=(n_1,\ldots,n_d)\in \Z^d$ such that $\rho(x,x')<\d$, $\rho(y,y')<\d$, and
$$\rho(\tau^{n\cdot\ep}x',\tau^{n\cdot\ep}y')<\d \  \text{\rm for any}\  \ep \in\{0,1\}^d\setminus \{(1,1,\ldots,1)\}, $$
where $n\cdot\ep=n_1\ep_1+\ldots +n_d\ep_d$. The set of regionally proximal pairs of order $d$ is denoted by $RP^{[d]}(X)$.

It is easy to see that $RP^{[d]}$ is a closed and invariant equivalence relation.
\end{de}

\begin{thm}\cite{HKM, HSY}\label{nilfactor}
Let $(X,\tau)$ be a minimal topological dynamical system and $d\in \N$. Then $(X/RP^{[d]},\tau)$ is the maximal d-step nilfactor of $(X,\tau)$.

It means that $(X/RP^{[d]},\tau)$ is a system of order d. Also, any system of order d, which is a factor of $(X,\tau)$, is a factor of $(X/RP^{[d]},\tau)$.
\end{thm}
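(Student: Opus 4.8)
This statement is the topological analogue of the Host--Kra structure theorem, and my plan is to follow the strategy of \cite{HKM, HSY}, organising the argument around two ingredients: a lifting property of the relation $RP^{[d]}$ under factor maps of minimal systems, and a structure theorem identifying systems of order $d$ with minimal systems on which $RP^{[d]}$ collapses to the diagonal. The essential technical device is the dynamical cube space $\Q^{[d]}(X)\subseteq X^{2^d}$, the orbit closure of the diagonal under the group generated by the face transformations. I would first record the characterisation that, for a minimal system, $(x,y)\in RP^{[d]}(X)$ holds exactly when the configuration having $x$ in one distinguished vertex and $y$ in the remaining $2^{d+1}-1$ vertices belongs to $\Q^{[d+1]}(X)$. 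This turns the combinatorial definition of $RP^{[d]}$ into a statement about membership in a single minimal cube system, which is what makes the relation tractable.

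The first step is the lifting property: for a factor map $\pi\colon(X,\tau)\to(Y,S)$ between minimal systems one has $(\pi\times\pi)\big(RP^{[d]}(X)\big)=RP^{[d]}(Y)$. The inclusion ``$\subseteq$'' is immediate from continuity and the definition; the reverse inclusion is obtained through the cube characterisation, by lifting a cube in $\Q^{[d+1]}(Y)$ that witnesses order-$d$ proximality to a cube in $\Q^{[d+1]}(X)$ and then using minimality of the cube system to adjust the lift. Taking $RP^{[d]}$ to be a closed invariant equivalence relation, as recorded after its definition, I apply this to the quotient map $X\to X/RP^{[d]}$: since that map collapses exactly the $RP^{[d]}$-classes, the lifting property forces $RP^{[d]}(X/RP^{[d]})=\D$.

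With the relation trivial on the quotient, the assertion that $X/RP^{[d]}$ is a system of order $d$ reduces to the structure theorem, that a minimal system with $RP^{[d]}=\D$ is an inverse limit of minimal $d$-step nilsystems. This is where I expect the main obstacle to lie: it is the deepest part of the theory, in which the cube space $\Q^{[d]}(X)$ is used to build a $d$-step nilpotent group of symmetries and to identify the quotient with a tower of nilmanifolds. In the minimal distal case this is the Host--Kra--Maass theorem, and the removal of the distality hypothesis rests on the transitivity of $RP^{[d]}$ for general minimal systems due to Shao--Ye; I would invoke these rather than reprove them, as they lie well beyond the scope of a self-contained argument.

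Maximality is then clean. If $(Z,R)$ is any system of order $d$ that is a factor of $(X,\tau)$ via a map $p$, then, being an inverse limit of $d$-step nilsystems, it is distal with $RP^{[d]}(Z)=\D$. The lifting property gives $(p\times p)\big(RP^{[d]}(X)\big)=\D$, so $RP^{[d]}(X)$ is contained in the relation $\{(x,x'):p(x)=p(x')\}$ defining the factor. Consequently $p$ factors through the quotient $X\to X/RP^{[d]}$, which exhibits $(Z,R)$ as a factor of $(X/RP^{[d]},\tau)$ and proves that $(X/RP^{[d]},\tau)$ is the maximal $d$-step nilfactor of $(X,\tau)$.
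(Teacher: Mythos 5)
Your sketch cannot be compared against a proof in the paper, because the paper offers none: Theorem~\ref{nilfactor} is quoted as a known result from \cite{HKM, HSY}. Measured against the argument in those references, your outline is correct and is essentially \emph{their} proof: the cube characterisation of $RP^{[d]}$ via $\Q^{[d+1]}(X)$, the lifting property $(\pi\times\pi)(RP^{[d]}(X))=RP^{[d]}(Y)$ for factor maps of minimal systems, the resulting triviality of $RP^{[d]}$ on the quotient $X/RP^{[d]}$, the structure theorem for minimal systems with $RP^{[d]}=\Delta$ (Host--Kra--Maass in the distal case, extended to general minimal systems by Shao--Ye's results on $RP^{[d]}$), and the clean maximality argument via the lifting property. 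You correctly isolate the two genuinely deep inputs --- the structure theorem and the fact that $RP^{[d]}$ is an equivalence relation for arbitrary minimal systems --- and cite them rather than reprove them, which is the right scope here; note in passing that the paper itself understates the second input by calling it ``easy to see,'' whereas for non-distal minimal systems it is a substantial theorem.
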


Now we use $RP^{[d]}$ to define $\infty$-step nilsystem\cite{DSASX}.
Observe that
$$\ldots\subset RP^{[d+1]}\subset RP^{[d]}\subset \ldots \subset RP^{[1]}=RP(X,\tau)$$
Let $RP^{[\infty]}=\bigcap_{d\geq 1}RP^{[d]}$. It follows that for any minimal system $(X,\tau)$, $RP^{[\infty]}$ is a closed invariant equivalence relation.

\begin{de}
A minimal system $(X,\tau)$ is an {\em $\infty$-step nilsystem} or a {\em system of order $\infty$}, if the equivalence relation $RP^{[\infty]}$ is trivial. i.e., coincides with the diagonal.
\end{de}

\subsection{Almost automorphic}

In \cite{HSY}, the authors gave some relationship between nilsystems and almost automorphic points.



For a topological dynamical system $(X,\tau)$, a point $x\in X$ is said to be {\em almost automorphic} (AA for short) if from any sequence $\{n'_i\}\subset \Z$ one may extract a subsequence $\{n_i\}$ such that
$$\lim_{j\rightarrow \infty}\lim_{i\rightarrow \infty} \tau^{n_i-n_j}x=x.$$

A point $x\in X$ is called $d$-$step\ almost\ automorphic$ (or {\em $d$-step AA} for short) if $$RP^{[d]}[x]=\{y\in X:\ (x,y)\in RP^{[d]}\}=\{x\}.$$
A minimal topological dynamical system is call $d$-$step\ almost\ automorphic$ if it has a $d$-step AA point. Since $RP^{[d]}$ is an equivalence relation for minimal topological dynamical system, by definition it follows that for a minimal system $(X,\tau)$, it is a d-step AA system for some $d\in \N$ if and only if it is an almost one-to-one extension of its maximal d-step nilfactor.

In \cite{HSY}, one also describes AA point by the returning time of one point to it is open neighborhoods.
\subsection{Nil Bohr sets}
A subset $A$ of $\Z$ is a $Bohr$-$set$ if there exist $m\in \N$, $\a\in \T^m$ and a non-empty open set $U\subset \T^m$ such that $\{n\in \Z:n\a\in U\}$ is contained in $A$. The set $A$ is a $Bohr_0$-$set$ if additionally $0\in U$.

A subset $A$ of $\Z$ is a $Nil_d$ $Bohr_0$ $set$ if there exist a d-step nilsystem $(X,T)$, $x_0\in X$ and an open neighborhood $U$ of $x_0$ such that $N(x_0,U)=:\{n\in \Z:T^nx_0\in U\}$ is contained in $A$. Denote by $\F_{d,0}$ the family consisting of all $Nil_d$ $Bohr_0$-sets \cite{HK10}.

\medskip

Let $\F_{GP_d}$ be the family generated by the set of forms
$$\bigcap_{i=1}^k\{n\in \Z:P_i(n) \ (\text{\rm mod}\Z)\in (-\ep_i,\ep_i)\}$$
where $k\in \N,P_1,\ldots,P_k$ are generalized polynomials of degree $\leq d$ , and $\ep_i>0$.

\medskip

In this paper, we consider generalized polynomial family $GP=\bigcup_{d=0}^{\infty}GP_d$, which is defined by a sequence of family $GP_d$. Where $GP_d$ is the smallest family satisfies:
\begin{enumerate}
  \item  Any $k$, let $p_i\in \N$, $0\leq i\leq k$. If $f_1,f_2,\ldots,f_l$ with $f_i\in GP_{p_i}$ and $\sum_{i=0}^l p_i\leq d$, then $ an^{p_0}\lceil f_1\rceil \lceil f_2\rceil\ldots \lceil f_k\rceil \in GP_d$. Where $\lceil x\rceil $ is the smallest integer such that $\lceil x\rceil \geq x$.
  \item  If $g,h\in GP_d$, then $gh,g\pm h, cg,\lceil g\rceil \in GP_d$. Where $c\in \R$
\end{enumerate}
And $G_0=\R$.

If $f\in GP_d$, we say the degree of $f$ $deg(f)\leq d$.
\begin{thm}\cite{HSY}\label{F_d=F_GPd}
Let $d\in \N$. Then $\F_{d,0}=\F_{GP_d}$.
\end{thm}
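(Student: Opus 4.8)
The plan is to prove the two inclusions $\F_{GP_d}\subseteq \F_{d,0}$ and $\F_{d,0}\subseteq \F_{GP_d}$ separately, using throughout that both collections are \emph{upward closed} families: a set belongs to either family as soon as it contains one of the designated generating sets. Consequently it suffices to match the generating sets. For $\F_{d,0}\subseteq \F_{GP_d}$ I must place a $GP_d$-defined set inside each $N(x_0,U)$, and for the reverse inclusion I must place a set of the form $N(x_0,U)$, with $U$ an open neighborhood of the base point, inside each generating intersection $\bigcap_{i=1}^k\{n:P_i(n)\ (\mathrm{mod}\,\Z)\in(-\ep_i,\ep_i)\}$. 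The bridge between the two languages is the Bergelson--Leibman dictionary: a bounded sequence is a generalized polynomial of degree $\le d$ exactly when it is obtained by evaluating a Riemann-integrable function along an orbit of a $d$-step nilsystem, and, conversely, the Mal'cev coordinate functions of a polynomial orbit on a $d$-step nilmanifold are generalized polynomials of degree $\le d$.

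For $\F_{d,0}\subseteq\F_{GP_d}$, realize a given $d$-step nilsystem as $(G/\Gamma,T)$ with $T$ left translation by $g$ and base point $x_0=e\Gamma$, and fix Mal'cev coordinates on $G/\Gamma$. Writing $T^nx_0$ in these coordinates produces functions $Q_1(n),\dots,Q_m(n)$ which, by the dictionary, are generalized polynomials of degree $\le d$ (vanishing at $n=0$ once centered at $x_0$). Any open $U\ni x_0$ contains a coordinate box $\{|Q_j|<\delta_j\ (\mathrm{mod}\,\Z)\}$, so the $GP_d$ generating set $\bigcap_j\{n:Q_j(n)\ (\mathrm{mod}\,\Z)\in(-\delta_j,\delta_j)\}$ is contained in $N(x_0,U)$, hence in any $A\supseteq N(x_0,U)$. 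This gives $A\in\F_{GP_d}$.

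For $\F_{GP_d}\subseteq\F_{d,0}$, start from $B=\bigcap_{i=1}^k\{n:P_i(n)\ (\mathrm{mod}\,\Z)\in(-\ep_i,\ep_i)\}$. By the dictionary, each sawtooth $n\mapsto P_i(n)\ (\mathrm{mod}\,\Z)$ is represented as $F_i(g_i^n x_i)$ on a $d$-step nilsystem $(X_i,T_i)$; passing to the product $d$-step nilsystem $X=\prod_i X_i$ (a product of $d$-step systems is again $d$-step) with base point $x_0=(x_i)_i$ realizes all conditions simultaneously as $T^n x_0\in V_i$, where $V_i=F_i^{-1}((-\ep_i,\ep_i))$. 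I then replace each $V_i$ by its interior and intersect to obtain an open $U$ with $N(x_0,U)\subseteq B$; after checking $x_0\in U$ (which is where the $\mathrm{Bohr}_0$ normalization $P_i(0)\equiv 0$ enters) this exhibits $B\in\F_{d,0}$.

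The main obstacle is precisely this last passage. Because generalized polynomials are built from the ceiling function, the representing functions $F_i$ are not continuous: they jump across a codimension-one subvariety of the nilmanifold, so $V_i=F_i^{-1}((-\ep_i,\ep_i))$ is neither open nor closed, and its interior could a priori omit the base point $x_0$ or shrink the returning-time set too much. Controlling this requires showing that the discontinuity locus is a finite union of translates of a proper sub-nilmanifold, hence nowhere dense and of Haar measure zero, and then choosing the box thresholds small enough that an honest open neighborhood $U$ of $x_0$ sits inside $\bigcap_i V_i$ while the step stays $\le d$. This discontinuity analysis is the delicate technical core; once the dictionary and this estimate are in hand, the two inclusions reduce to bookkeeping of coordinates.
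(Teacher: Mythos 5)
You should first note that the paper contains no proof of this statement at all: Theorem \ref{F_d=F_GPd} is quoted from \cite{HSY}, where it is a main theorem of a memoir-length work, so your proposal has to stand on its own, and as it stands it has two genuine gaps. The first is the ``dictionary'' you invoke. The Bergelson--Leibman representation theorem gives a bounded generalized polynomial as a piecewise polynomial function evaluated along a \emph{polynomial} orbit $g(n)\Gamma$ in some nilmanifold; it does not give the sharp statement you need, namely that degree $\le d$ corresponds to a nilmanifold of step $\le d$ carrying a \emph{linear} orbit $g^n x_0$ (linearity is essential, since $N(x_0,U)=\{n:T^nx_0\in U\}$ is defined through a single translation $T$). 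Converting a polynomial orbit into a linear orbit on a larger group in general raises the step, and keeping the step bounded by the degree is precisely the quantitative content of the theorem being proved. Invoking the dictionary for both inclusions is therefore circular: the degree-to-step bookkeeping is the theorem, not a tool one can cite to prove it.

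The second gap is the one you flag yourself and then set aside as an ``estimate.'' In the inclusion $\F_{GP_d}\subseteq \F_{d,0}$ you must produce an \emph{open} $U\ni x_0$ with $N(x_0,U)\subseteq B$, and nowhere-density (or Haar-nullity) of the discontinuity locus of the representing functions $F_i$ cannot deliver this: the obstruction is not the size of the locus but whether $x_0$ itself lies on it. That is the typical situation, since generalized polynomials are built from $\lceil\cdot\rceil$, whose jumps occur exactly at integer values of the inner expressions, and those inner expressions take integer values at $n=0$. If $x_0$ is a discontinuity point of some $F_i$, then orbit points $T^nx_0$ arbitrarily close to $x_0$ may have $F_i(T^nx_0)$ far from $0$; such $n$ belong to $N(x_0,U)$ for \emph{every} open $U\ni x_0$ yet lie outside $B$, so no choice of ``small thresholds'' repairs the argument --- an open neighborhood of a boundary point of $\bigcap_i V_i$ is never contained in $\bigcap_i V_i$. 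Escaping this requires changing the representation (a different nilsystem and base point at which continuity holds), and that is exactly the hard, constructive part of the proof in \cite{HSY}, where for each generalized polynomial of degree $\le d$ an explicit $d$-step (upper-triangular matrix) nilsystem and a genuinely open neighborhood are built by hand, via an inductive structure theory for a special subclass of generalized polynomials. What your proposal calls ``bookkeeping of coordinates'' is, in the actual proof, the bulk of the work.
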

\begin{thm}\cite{HSY}\label{main conclusion fo HSY}
Let $(X,T)$ be a minimal topological dynamical system, $x\in X$ and $d\in \N\cup\{\infty\}$. Then the following statements are equivalent:

(1) $x$ is d-step AA point.

(2) $N(x,V)\in \F_{d,0}$ for each neighborhood $V$ of $x$.

\end{thm}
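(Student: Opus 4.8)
The plan is to pass to the maximal $d$-step nilfactor. By Theorem \ref{nilfactor}, let $\pi\colon (X,T)\to (Y,S):=(X/RP^{[d]},T)$ be the factor map onto the maximal $d$-step nilfactor, which is a system of order $d$. Since $RP^{[d]}$ is a closed invariant equivalence relation whose classes are exactly the fibres of $\pi$, the point $x$ is a $d$-step AA point precisely when $\pi^{-1}(\pi(x))=\{x\}$, i.e.\ when $\pi$ is one-to-one over $\pi(x)$. The whole argument is organized around this reformulation.

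For $(1)\Rightarrow(2)$ I would assume $\pi^{-1}(\pi(x))=\{x\}$ and fix a neighborhood $V$ of $x$. Writing $y_0:=\pi(x)$, the compactness of $X\setminus V$ together with the closedness of $\pi$ (a continuous map of a compact space into a Hausdorff space) produces a neighborhood $W:=Y\setminus\pi(X\setminus V)$ of $y_0$ with $\pi^{-1}(W)\subset V$, so that $N(x,V)\supset N(y_0,W)$. Since $(Y,S)$ is an inverse limit of $d$-step minimal nilsystems, any neighborhood $W$ of $y_0$ contains a basic neighborhood $p^{-1}(W')$ coming from a finite stage, where $p\colon Y\to Z$ is the projection onto a single $d$-step nilsystem $(Z,R)$ and $W'$ is a neighborhood of $p(y_0)$; hence $N(y_0,W)\supset N(p(y_0),W')$, and the latter is a $Nil_d$ $Bohr_0$ set by definition. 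Therefore $N(x,V)\in\F_{d,0}$.

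The main work is the converse $(2)\Rightarrow(1)$, which I would prove by contraposition. Suppose $x$ is not a $d$-step AA point, so there is $y\neq x$ with $\pi(y)=\pi(x)$; choose an open $V\ni x$ with $y\notin\overline{V}$. I claim $N(x,V)\notin\F_{d,0}$. If instead $N(x,V)\supset N(z_0,U)$ for some minimal $d$-step nilsystem $(Z,R)$, a point $z_0$ and a neighborhood $U\ni z_0$, then the inclusion $N(z_0,U)\subset N(x,V)$ says that every return of $z_0$ to $U$ is accompanied by a return of $x$ to $V$. The plan is to promote this one-sided itinerary inclusion into an honest dynamical relation: forming a minimal joining of $(X,T)$ with $(Z,R)$ through the point $(x,z_0)$ and invoking the universality clause of Theorem \ref{nilfactor}, the order-$d$ information carried by the $Z$-coordinate must factor through $Y=X/RP^{[d]}$, hence be a function of $\pi(\cdot)$. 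Because $\pi(y)=\pi(x)$ and $0\in N(z_0,U)$, this would force $y$ to share $x$'s itinerary with respect to $V$, contradicting $y\notin\overline{V}$.

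The hard part is exactly this last step: converting the purely combinatorial inclusion $N(z_0,U)\subset N(x,V)$ into a factor map, since the nilsystem $(Z,R)$ in the definition of $\F_{d,0}$ is external to $X$ and the orbit closure of $(x,z_0)$ in the product need not be minimal through $(x,z_0)$, so one cannot naively pass to a minimal subset while retaining the marked point. I expect to resolve this using the finer cube description of $RP^{[d]}$ via dynamical parallelepipeds, which controls how the returns of $z_0$ are reflected in $X$; alternatively, the identity $\F_{d,0}=\F_{GP_d}$ of Theorem \ref{F_d=F_GPd} may be used to replace the external nilsystem by an explicit generalized-polynomial return set, whose level structure can be read directly on the $d$-step nilfactor and compared along the fibre containing both $x$ and $y$.
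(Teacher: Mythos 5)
A preliminary remark: the paper does not prove this statement at all --- it is quoted verbatim from \cite{HSY}, where it is one of the main theorems of that memoir --- so your proposal must be judged on its own merits rather than against an internal argument. Your reformulation of $d$-step almost automorphy as $\pi^{-1}(\pi(x))=\{x\}$ and your proof of $(1)\Rightarrow(2)$ are correct: compactness of $X\setminus V$ gives a neighborhood $W$ of $\pi(x)$ with $\pi^{-1}(W)\subset V$, and in a sequential inverse limit every neighborhood of a point contains the pullback of a neighborhood from a single finite stage, so $N(x,V)\supset N(\pi(x),W)\supset N(p(\pi(x)),W')$, the last set being a return-time set in a genuine $d$-step nilsystem; since membership in $\F_{d,0}$ means precisely containing such a set, $(2)$ follows. (The case $d=\infty$ still needs its own sentence, since a system of order $\infty$ is not itself a nilsystem and the meaning of $\F_{\infty,0}$ has to be unwound, but this is minor.)

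The genuine gap is $(2)\Rightarrow(1)$, and it sits exactly where you stopped. The inclusion $N(z_0,U)\subset N(x,V)$ is a purely combinatorial statement; it yields no equivariant map, no joining, and no closed invariant relation between $(X,T)$ and the external nilsystem $(Z,R)$. The orbit closure of $(x,z_0)$ in $X\times Z$ need not be minimal, passing to a minimal subset loses the marked point (as you yourself note), and the universality clause of Theorem \ref{nilfactor} applies only to systems of order $d$ that are \emph{factors of} $(X,T)$ --- it says nothing about a nilsystem that merely dominates one return-time set of $x$. Note also that hypothesis $(2)$ quantifies over all neighborhoods $V$ of $x$, with possibly a different witnessing nilsystem for each $V$, whereas your contrapositive fixes a single $V$ and a single witness; a correct argument must exploit the whole family coherently. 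This implication is the hard half of the theorem: in \cite{HSY} its proof occupies the bulk of the memoir, passing through Theorem \ref{F_d=F_GPd} (the identification $\F_{d,0}=\F_{GP_d}$), a detailed combinatorial theory of generalized polynomials of degree at most $d$, and a characterization of $RP^{[d]}$ by cube-type ($SG_d$) return sets. Your closing sentence gestures at these tools but does not deploy them, so what you have is a complete proof of the easy direction and a restatement --- not a proof --- of the hard one.
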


\section{The proof of Theorem \ref{main conclusion}} \label{section-tori}

Let $(X,\tau)$ be one affine transformation of the following spaces:
\begin{enumerate}
  \item  tori;
  \item  nilpotent manifolds;
  \item  compact abelian groups.
\end{enumerate}
We want to show that $(X,\tau)$ is zero entropy if and only if for each $x\in X$, the orbit system $(\overline{orb(x,\tau)},\tau)$ is isomorphic to an inverse limit of nilsystems. The difficult part is to show if $(X,\tau)$ is zero entropy, then for each $x\in X$, the orbit system $(\overline{orb(x,\tau)},\tau)$ is isomorphic to an inverse limit of nilsystems. Since the proofs for tori, nilpotent manifolds, and compact abelian groups are quite different, we will prove them separately. Before that, we show the easy direction of the proof of Theorem \ref{main conclusion}.

\subsection{The easy part of the proof of Theorem \ref{main conclusion}}\

\medskip
Now we assume that for each $x\in X$, the orbit system $(\overline{orb(x,\tau)},\tau)$ is isomorphic to an inverse limit of nilsystems. We will show that the entropy of the system is zero. In fact, we show that $(X,\tau)$ is distal. Since each distal system has zero entropy, we finish the proof.

In fact, if for each point $x\in X$, the closure of $orb(x,\tau)$ is an inverse limit of nilsystems, then $X$ should be distal. That is, if $y,y'\in \overline{orb(x,\tau)}$, since $(\overline{orb(x,\tau)},\tau)$ is distal, then $(y,y')$ is distal.

If $y\in \overline{orb(x,\tau)}$ and $y'\in \overline{orb(x',\tau)}$ for two different orbits with $\overline{orb(x,\tau)}\cap\overline{orb(x,\tau)}=\emptyset$, since $\overline{orb(x,\tau)}$ and $\overline{orb(x',\tau)}$ is closed, $\rho(y,y')\geq \min\{\rho(z,z')\ z\in\overline{orb(x,\tau)},z'\in\overline{orb(x,\tau)}\}=c_{x,x'}>0$ for some fix number $c_{x,x'}$. Then $(X,\tau)$ is distal.

\subsection{Results on tori and nilpotent manifolds}

The main aim of this section is to prove the following results:

\begin{thm}\label{torus,Lie group,compact abelian group nilpotent}
Let $(X,\tau)$ be an affine transformation with zero entropy. If $X$ is one of the following spaces:
\begin{enumerate}
  \item  tori;
  \item  nilpotent manifolds;

\end{enumerate}
then there is a fixed $d$, such that for each $x\in X$, an open neighborhood $U$ of $x$ and its return time $N(g,U)=\{n:\tau^n x\in U\}$, we have $N(x,U)\supset \bigsqcup_{j=1}^b N_j$($\bigsqcup$ means disjoint union) for some $b\in \N$ with $N_j \in \F_{GP_d}$ and $N_i\cap N_j=\emptyset$ for each $i\neq j$.
\end{thm}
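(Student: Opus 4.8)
The plan is to make the action of $\tau$ explicit enough that the return time $N(x,U)$ can be read off directly as a generalized polynomial (nil-Bohr) set, and then to organize these sets along a fixed set of residue classes so that they become disjoint. Throughout I write the torus case additively as $\tau(x)=Ax+g_0$ on $X=\T^m$, with $A\in GL_m(\Z)$; the nilmanifold case will be handled in the same spirit but in Mal'cev coordinates.

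First I would extract the algebraic consequence of zero entropy. By Theorem \ref{entropy of torus} the entropy of the automorphism part equals $\sum_j\log|\lambda_j|_+$, so zero entropy forces every eigenvalue $\lambda_j$ of $A$ to satisfy $|\lambda_j|\le 1$. Since $A\in GL_m(\Z)$ its characteristic polynomial is monic over $\Z$ with constant term $\pm1$, so each $\lambda_j$ is an algebraic integer all of whose conjugates have modulus $\le1$; by Kronecker's theorem each $\lambda_j$ is a root of unity. Hence there is a fixed $q\in\N$ with $A^q$ unipotent, i.e.\ $N:=A^q-I$ is nilpotent with $N^d=0$ for some $d\le m$. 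This $q$ and this $d$ depend only on $(X,\tau)$, which is what will make the degree uniform in $x$.

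Next I would iterate. A direct computation gives $\tau^n x=A^n x+S_n g_0$ with $S_n=\sum_{k=0}^{n-1}A^k$. Splitting $n=qm+r$ with $0\le r<q$ and using $A^{qm}=(I+N)^m=\sum_{k=0}^{d-1}\binom{m}{k}N^k$, each coordinate of $A^n x+S_n g_0$ becomes, on the residue class $r$, an honest polynomial in $m$ of degree $<d$ whose coefficients are built from the entries of $x$, $g_0$ and the fixed matrices $A^r,N^k$. Writing $m=(n-r)/q$ re-expresses these as generalized polynomials in $n$ of degree $\le d$, the integer reductions modulo $\Z^m$ implicit in working on the torus being exactly the ceiling operations permitted in $GP_d$. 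For a small ball $U$ of radius $\ep$ about $x$, the condition $\tau^n x\in U$ with $n\equiv r\pmod q$ then reads coordinatewise as finitely many constraints $P_{r,i}(n)\ (\mathrm{mod}\ \Z)\in(-\ep',\ep')$ with $\deg P_{r,i}\le d$; since the residue-class restriction $\{n\equiv r\pmod q\}$ is itself the degree-one generalized polynomial set $\{n:(n-r)/q\ (\mathrm{mod}\ \Z)\in(-\ep',\ep')\}$ for $0<\ep'<1/q$, the set $N_r:=\{n\equiv r\pmod q:\tau^n x\in U\}$ is a finite intersection of generating sets, hence $N_r\in\F_{GP_d}$ by Theorem \ref{F_d=F_GPd}. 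The $N_r$ are pairwise disjoint (distinct residue classes) and their union is $N(x,U)$, so taking $b=q$ and $N_j=N_r$ gives $N(x,U)\supset\bigsqcup_{j=1}^b N_j$ with the required uniform $d$.

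\emph{The main obstacle is the nilmanifold case.} There I would first need the analogue of the reduction above: that zero entropy forces the induced map $dA$ on the Lie algebra $\mathfrak g$ to be quasi-unipotent. This should follow by filtering $G$ through its lower central series $G=G_1\supset G_2\supset\cdots$, observing that $\tau$ induces toral automorphisms on the graded pieces $G_i/G_{i+1}$ modulo the lattice and that the entropy is the sum of the contributions of these toral pieces, so each must have eigenvalues of modulus $1$. Granting this, the iterate $\tau^n(g\Gamma)=g_0\,A(g_0)\cdots A^{n-1}(g_0)\,A^n(g)\,\Gamma$ must be expanded in Mal'cev coordinates, where multiplication is polynomial by nilpotency and $A^n$ again grows polynomially along each class $\bmod\ q$. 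The delicate points are controlling the reduction modulo $\Gamma$ (which is what produces the ceiling terms) and verifying that the degree stays bounded by a fixed $d$ independent of $x$. I expect this coordinate bookkeeping, rather than any conceptual step, to be the crux of the proof.
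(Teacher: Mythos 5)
Your torus argument is correct and is essentially the paper's own proof of Proposition \ref{tori}: Sinai's entropy formula plus Kronecker's lemma give that the eigenvalues are roots of unity, one passes to a power $A^q$ that is unipotent (upper triangular with diagonal $1$ after conjugation), and then each coordinate of $\tau^{qm+r}x$ is an honest polynomial in $m$ of degree bounded uniformly in $x$, so $N(x,U)$ decomposes over the residue classes mod $q$. Your remark that the class $\{n\equiv r\ (\mathrm{mod}\ q)\}$ is itself a degree-one generalized-polynomial set is a clean way to land the whole constraint inside $\F_{GP_d}$, slightly tidier than the paper's formulation.

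However, the nilmanifold half of the theorem is not proved: you state a plan and explicitly concede that the ``coordinate bookkeeping'' is the crux, but that bookkeeping is precisely the content the theorem requires and precisely where the paper does its real work. Three things are missing. First, polynomiality of $g\mapsto A(g)$ in matrix (Mal'cev) coordinates with a uniform degree bound: the paper's Proposition \ref{finite degree of homomorphism of nilpotent group} proves, by a double induction on the central filtration and on one-parameter generators $e_r^{x_r}$, that the $(i,j)$ entry of $A(g)$ is a linear form in the level-$i$ coordinates plus a polynomial in the lower-level coordinates of $L$-polynomial degree at most $2^{i-1}$; nothing in your sketch substitutes for this, and it is not a formal consequence of nilpotency since $A$ is only a group homomorphism, not a linear map in these coordinates. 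Second, polynomiality in $n$ of the entries of $A^{nb+r}(g)$ and, separately, of the cocycle product $A(g_0)A^2(g_0)\cdots A^{nb+r}(g_0)$ (a product of $n$ matrices, each depending polynomially on its index): the paper handles both by induction along the central series, using unipotence of the linear parts $B_i$ and the fact that $\sum_{j=0}^{n-1}v(j)$ is a polynomial in $n$ whenever $v$ is; the degree bound independent of $g$ comes out of this induction and is not automatic. Third, the reduction modulo $\Gamma$: one must inductively extract, entry by entry down the matrix, integer parts $y_{i,j,r}=[\,\cdot\,]$ and fractional parts $q_{i,j,r}=\{\,\cdot\,\}$, and verify via the closure properties of $GP_d$ that these are generalized polynomials of bounded degree --- this is exactly where the ceiling operations permitted in $\F_{GP_d}$ enter. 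Until these steps are carried out, your argument establishes the theorem only for tori.
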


\begin{thm}\label{iso of torus and nilmanifold}
Let $(X,\tau)$ be an affine transformation with zero entropy. If $X$ is one of the following spaces:
\begin{enumerate}
  \item  tori;
  \item  nilpotent manifolds;
\end{enumerate}
  then for each $g\in G$, the orbit system $(\overline{orb(g,\tau)},\tau)$ is isomorphism to a system of order $d$.
\end{thm}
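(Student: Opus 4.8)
The plan is to deduce Theorem~\ref{iso of torus and nilmanifold} from Theorem~\ref{torus,Lie group,compact abelian group nilpotent} by combining the return-time structure with the characterization of $d$-step almost automorphic points in Theorem~\ref{main conclusion fo HSY}. First I would fix a point $g\in X$ and restrict attention to the orbit system $(Y,\tau)$ where $Y=\overline{orb(g,\tau)}$. By the easy direction already established, $(X,\tau)$ is distal, so each orbit closure $(Y,\tau)$ is a minimal distal system; minimality is what lets me invoke Theorems~\ref{nilfactor} and~\ref{main conclusion fo HSY}, both of which are stated for minimal systems. The goal then reduces to showing that for the fixed uniform $d$ supplied by Theorem~\ref{torus,Lie group,compact abelian group nilpotent}, every point of $Y$ is a $d$-step AA point, i.e.\ $RP^{[d]}[y]=\{y\}$ for all $y\in Y$.

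The heart of the argument is to upgrade the disjoint-union return-time estimate into the clean membership $N(y,U)\in\F_{d,0}$ required by Theorem~\ref{main conclusion fo HSY}(2). Theorem~\ref{torus,Lie group,compact abelian group nilpotent} only gives $N(y,U)\supset\bigsqcup_{j=1}^b N_j$ with each $N_j\in\F_{GP_d}$, and by Theorem~\ref{F_d=F_GPd} we have $\F_{GP_d}=\F_{d,0}$, so each $N_j$ is itself a $Nil_d$ $Bohr_0$-set. The obstacle is that the family $\F_{d,0}$ is not obviously closed under finite unions, so a finite disjoint union of $Nil_d$ $Bohr_0$-sets need not manifestly be a single one. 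I would handle this by realizing each $N_j$ through a $d$-step nilsystem $(Z_j,T_j)$ with base point $z_j$ and neighborhood $U_j$, forming the product nilsystem $(\prod_j Z_j,\prod_j T_j)$ with base point $(z_j)_j$, and choosing an open neighborhood of the base point whose return-time set lies inside a chosen $N_{j_0}$; since the product of $d$-step nilsystems is again $d$-step nilpotent, the product construction keeps us inside $\F_{d,0}$. Because containing even one of the $N_j$ already forces $N(y,U)\in\F_{d,0}$ (the family is a filter-like upward-closed collection of sets, as any superset of a return-time set still contains that return-time set), it suffices that $N(y,U)$ contains a single $Nil_d$ $Bohr_0$-set, which the disjoint union manifestly provides.

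With $N(y,U)\in\F_{d,0}$ for every neighborhood $U$ of every $y\in Y$ in hand, Theorem~\ref{main conclusion fo HSY} applied in the minimal system $(Y,\tau)$ gives that each $y$ is a $d$-step AA point, so $RP^{[d]}(Y)$ collapses to the diagonal. Then Theorem~\ref{nilfactor} identifies the maximal $d$-step nilfactor $Y/RP^{[d]}$ with $Y$ itself, and since $Y/RP^{[d]}$ is by that theorem a system of order $d$, I conclude that $(\overline{orb(g,\tau)},\tau)=(Y,\tau)$ is isomorphic to a system of order $d$, which is exactly the assertion of the theorem.

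I expect the main difficulty to be precisely the upward-closure/union step: verifying that containment of one $Nil_d$ $Bohr_0$-set genuinely suffices, and that no uniformity in $d$ is lost when passing from the collection $\{N_j\}$ to a single witnessing nilsystem. I would also need to be careful that the uniform $d$ from Theorem~\ref{torus,Lie group,compact abelian group nilpotent} does not degrade across the different orbit closures, but since that theorem already delivers a single $d$ working for all $x\in X$ simultaneously, this uniformity is inherited directly and no further bookkeeping on $d$ is required.
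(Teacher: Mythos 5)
Your proposal runs on the same machinery as the paper (Theorem~\ref{torus,Lie group,compact abelian group nilpotent} plus Theorem~\ref{F_d=F_GPd} to get return times in $\F_{d,0}$, Theorem~\ref{main conclusion fo HSY} to get $d$-step AA points, Theorem~\ref{nilfactor} to collapse $RP^{[d]}$), but it has two genuine gaps. The first is circularity: you justify minimality of $(Y,\tau)$ by appealing to ``the easy direction already established.'' The easy direction proves distality \emph{under the hypothesis} that every orbit closure is an inverse limit of nilsystems --- which is exactly the conclusion being proved in this direction; here you only know the entropy is zero. Minimality is load-bearing, since both Theorem~\ref{nilfactor} and Theorem~\ref{main conclusion fo HSY} are stated for minimal systems, so you cannot leave it to a circular appeal. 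A non-circular substitute exists: members of $\F_{d,0}$ are syndetic (nilsystems are distal, so their points are minimal points), hence $N(g,U)\in\F_{d,0}$ for all neighborhoods $U$ makes $g$ uniformly recurrent and $\overline{orb(g,\tau)}$ minimal --- but you would have to supply this. (The paper, for its part, merely asserts minimality of its pieces $(X_r,\tau^b)$ without argument, so the gap is shared, but yours is actively circular rather than omitted.)

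The second gap is in the union step, which is precisely where your route diverges from the paper's. You rely on the literal statement of Theorem~\ref{torus,Lie group,compact abelian group nilpotent}: $b$ pairwise disjoint sets $N_j$, each in $\F_{GP_d}=\F_{d,0}$. But every member of $\F_{d,0}$ contains $0$ (a $Nil_d$ $Bohr_0$-set contains some $N(x_0,U)\ni 0$), so for $b\geq 2$ pairwise disjoint members of $\F_{d,0}$ cannot exist; the literal statement is inconsistent, and what the proof of that theorem actually produces is $N(x,U)=\bigcup_{r}\{nb+r:n\in M_r\}$ with $M_r\in\F_{GP_d}$, i.e.\ the pieces are generalized-polynomial return sets only \emph{after} the reparametrization $n\mapsto nb+r$. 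To make your upward-closure argument work you must show that the unique piece containing $0$, namely $\{nb:n\in M_0\}$, itself lies in $\F_{GP_d}$ --- i.e.\ that $GP_d$ is stable, with the same degree, under the substitution $n\mapsto n/b$. This is plausible and fixable, but it is exactly the step you leave unaddressed (your product-nilsystem digression does not help: products give intersections of return sets, not dilations or unions). The paper sidesteps the issue entirely by a different decomposition: it passes to $\tau^b$, shows each piece $(X_r,\tau^b)=(\overline{orb(\tau^rg,\tau^b)},\tau^b)$ has all points $d$-step AA and hence coincides with its maximal $d$-step nilfactor, and then recovers $(\overline{orb(g,\tau)},\tau)$ as a factor of a cyclic disjoint union of these order-$d$ systems. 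With your two gaps repaired, your direct route would be shorter than the paper's reassembly; as written, it is not complete.
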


\subsection{Tori}

A $d$-dim torus is $\T^d=\R^d/\Z^d$. An integer-valued $d\times d$ matrix $A$ can be seen as a homomorphism of $\R^d$ and $\Z^d$, and it induces a homomorphism on $\T^d$. A map $\tau:\T^d\rightarrow \T^d$ is an affine transformation if there are a matrix $A\in \Z^{d\times d}$ and a fixed point $b_0\in \T^d$ such that $\tau(b) = Ab + b_0$.

Note that if the integer-valued $d\times d$ matrix $A$ is a zero entropy homomorphism, then all the eigenvalues of $A$ are unity roots. In fact, according to Theorem \ref{entropy of torus}, if the integer-valued $D$ is a zero entropy homomorphism, then $\Pi_{j=1}^d \lambda_j$ is a non zero integer with all $|\lambda_j|\leq 1$, which means $|\lambda_j|=1$. And then by the following Kronecker Lemma we have that all the $\lambda_j$ are unity roots.

\begin{thm}[pp 108, Lemma (a)]\cite{Hecke}\label{roots}
If $\{\lambda_j\}_{j=1}^d$ satisfies $|\lambda_j|=1$, all the $j$, and
$$p(x)=\prod_{j=1}^d (x-\lambda_j)$$
are integral coefficicents polynomial, then all the $\lambda_j$ are unity roots.
\end{thm}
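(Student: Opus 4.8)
The statement is the classical theorem of Kronecker, and the plan is to run the standard symmetric-function-plus-pigeonhole argument on the powers of the $\lambda_j$. First I would record that each $\lambda_j$ is an algebraic integer: it is a root of the monic integer polynomial $p(x)=\prod_{j=1}^d(x-\lambda_j)$, whose coefficients are, up to sign, the elementary symmetric functions $\sigma_1,\ldots,\sigma_d$ of $\lambda_1,\ldots,\lambda_d$, all lying in $\Z$ by hypothesis.

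The core idea is to track all the powers simultaneously. For each integer $k\ge 1$ I would set $p_k(x)=\prod_{j=1}^d(x-\lambda_j^k)$. The key observation is that $p_k\in\Z[x]$: its coefficients are (up to sign) the elementary symmetric functions of $\lambda_1^k,\ldots,\lambda_d^k$, and each of these is a symmetric polynomial with integer coefficients in $\lambda_1,\ldots,\lambda_d$; by the fundamental theorem of symmetric polynomials it can therefore be written as an integer-coefficient polynomial in $\sigma_1,\ldots,\sigma_d\in\Z$, and so is itself an integer. Thus every $p_k$ is a monic polynomial of degree $d$ with integer coefficients.

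Next I would bound the coefficients uniformly in $k$. Since $|\lambda_j|=1$ we have $|\lambda_j^k|=1$ for all $j$ and $k$, so the $m$-th elementary symmetric function of the $\lambda_j^k$ has absolute value at most $\binom{d}{m}\le 2^d$. Consequently every coefficient of every $p_k$ is an integer lying in the fixed bounded range $[-2^d,2^d]$, and there are only finitely many monic integer polynomials of degree $d$ whose coefficients all lie in this range. Hence the family $\{p_k:k\ge 1\}$ is finite, and therefore the set of all of their roots $\{\lambda_j^k:1\le j\le d,\ k\ge 1\}$ is a finite subset of the unit circle.

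Finally, finiteness of this set forces periodicity of the powers of each individual $\lambda_j$: for fixed $j$ the values $\lambda_j,\lambda_j^2,\lambda_j^3,\ldots$ cannot all be distinct, so there exist integers $k<l$ with $\lambda_j^k=\lambda_j^l$; since $|\lambda_j|=1$ gives $\lambda_j\ne 0$, we may cancel to obtain $\lambda_j^{\,l-k}=1$, so $\lambda_j$ is a root of unity. I expect the only delicate step to be the integrality claim $p_k\in\Z[x]$, which is precisely where the hypothesis that $p$ has integer coefficients is used, via the fundamental theorem of symmetric polynomials; the coefficient bound and the pigeonhole conclusion are then routine.
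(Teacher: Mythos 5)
Your proof is correct and complete: the integrality of each $p_k(x)=\prod_{j=1}^d(x-\lambda_j^k)$ via the fundamental theorem of symmetric polynomials, the uniform coefficient bound $\binom{d}{m}\le 2^d$ coming from $|\lambda_j|=1$, and the pigeonhole step $\lambda_j^k=\lambda_j^l\Rightarrow\lambda_j^{l-k}=1$ are all sound. Note that the paper itself gives no proof of this statement --- it is quoted directly from Hecke \cite{Hecke} --- and your argument is precisely the classical Kronecker argument found in that source, so there is nothing to reconcile.
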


\medskip

Let
$$\rho(x,y)=\max_{1\le i\le d}|x_j-y_j|, \ \ x=(x_j)_j,y=(y_j)_j\in\T^d$$
be a metric of $\T^d$. Now we begin to consider the return time $N(a,\tau,\ep)$ of arbitrary $a=(a_j)_j\in \T^d$, where $N(a,\tau,\ep)=\{n \in \Z:\rho(\tau^na,a) <\ep\}$.

\begin{prop}\label{tori}
Let $X=\T^d$, $\tau$ be a affine transformation of $X$ with zero entropy, and $a=(a_j)_j\in X=\T^d$. Then for any $\ep$, $N(a,\tau,\ep)$ include some $d$-degree generalized polynomial return time. i.e.
$$N(a,\tau,\ep)\supset\bigcup_{r=0}^{b-1}(\bigcap_{j=1}^d\{tb+r,t\in N_{rj}\}),$$
where each $N_{rj}$ is a return time of a polynomial function $f_{rj}$ such that $N_{rj}=\big{\{}t,|\{f_{rj}(t)\}-a_j|<\ep\big{\}}$ for polynomials $f_{rj}$ with degree less than $d+1$.
\end{prop}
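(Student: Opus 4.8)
\emph{Plan of the proof.} The strategy is to write the iterate $\tau^n(a)$ explicitly through the usual cocycle formula and then exploit unipotence of a suitable power of $A$. Since $(X,\tau)$ has zero entropy, the discussion preceding the statement (Theorem \ref{entropy of torus} together with the Kronecker Lemma, Theorem \ref{roots}) shows that every eigenvalue $\lambda_j$ of $A$ is a root of unity. Hence there is an integer $b\ge 1$ with $\lambda_j^b=1$ for every $j$, so that $B:=A^b$ has all eigenvalues equal to $1$. Writing $N:=B-I$, the matrix $N$ is nilpotent with $N^d=0$. This unipotence is the single structural input driving the whole argument.

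First I would record the iteration formula. Iterating $\tau(x)=Ax+b_0$ gives $\tau^n(a)=A^na+\big(\sum_{k=0}^{n-1}A^k\big)b_0$. The idea is to split $n$ according to its residue modulo $b$: fix $r\in\{0,1,\dots,b-1\}$ and set $n=bt+r$. Then $A^{bt+r}=B^tA^r$, and grouping the geometric sum into blocks of length $b$ yields $\sum_{k=0}^{bt+r-1}A^k=\big(\sum_{s=0}^{t-1}B^s\big)C+B^tC_r$, where $C=\sum_{j=0}^{b-1}A^j$ and $C_r=\sum_{j=0}^{r-1}A^j$ are fixed integer matrices depending only on $b$ and $r$.

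Next I would turn every occurrence of $t$ into a polynomial. Because $N^d=0$, the nilpotent binomial expansion gives $B^t=(I+N)^t=\sum_{i=0}^{d-1}\binom{t}{i}N^i$, whose entries are polynomials in $t$ of degree at most $d-1$ (valid for all $t\in\Z$, using $(I+N)^{-1}=\sum_{i=0}^{d-1}(-1)^iN^i$ for negative exponents); and the hockey-stick identity $\sum_{s=0}^{t-1}\binom{s}{i}=\binom{t}{i+1}$ gives $\sum_{s=0}^{t-1}B^s=\sum_{i=0}^{d-1}\binom{t}{i+1}N^i$, whose entries have degree at most $d$. Substituting these into the iteration formula, each coordinate of $\tau^{bt+r}(a)$ becomes a real polynomial in $t$; I would then define $f_{rj}(t)$ to be the $j$-th coordinate (with fixed real representatives for $a$, $b_0$), noting $\deg f_{rj}\le d<d+1$ and $\tau^{bt+r}(a)_j\equiv f_{rj}(t)\ (\mathrm{mod}\,\Z)$.

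Finally I would verify the containment. Setting $N_{rj}=\{t:|\{f_{rj}(t)\}-a_j|<\ep\}$, if $t\in\bigcap_{j=1}^d N_{rj}$ then for each $j$ the circle distance between $\tau^{bt+r}(a)_j$ and $a_j$ is bounded by $|\{f_{rj}(t)\}-a_j|<\ep$, hence $\rho(\tau^{bt+r}(a),a)<\ep$, i.e. $bt+r\in N(a,\tau,\ep)$. Taking the union over $r\in\{0,\dots,b-1\}$ gives the asserted inclusion. I expect the main obstacle to be purely bookkeeping: executing the block decomposition of the geometric sum cleanly and confirming the degree bound $\le d$ on the matrix-valued polynomials. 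Once the unipotence of $B=A^b$ is established, no deeper ingredient is needed.
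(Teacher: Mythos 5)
Your proposal is correct and follows essentially the same route as the paper: zero entropy forces the eigenvalues of $A$ to be roots of unity (Kronecker), so some power $B=A^b$ is unipotent, and splitting $n=bt+r$ in the cocycle formula together with the binomial expansion of $B^t$ makes every coordinate of $\tau^{bt+r}(a)$ a polynomial in $t$ of degree at most $d$. The only (harmless) difference is that you obtain nilpotence of $A^b-I$ directly via Cayley--Hamilton, whereas the paper first conjugates $A$ to an upper triangular matrix $P^{-1}BP$ and works with the strictly upper triangular part of $B^b$; this is a minor streamlining, not a different argument.
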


\begin{proof}
Let $\tau:\T^d\rightarrow \T^d$, $(x_j)^T\rightarrow A(x_j)^T+(\a_j)^T$ be an affine transformation with zero entropy. Where $\a=(\a_j)\in \T^d$ is a fix point. Since $\tau$ has zero entropy, then $A$ is a $d\times d$ integral matrix. Since Kronecker Lemma, all the eigenvalues of $A$ is roots of unity. That means there are some $B$ and $P$ such that $A=P^{-1}BP$, where $B$ is a upper triangular matrix, and all the entries in the diagonal of $B$ are roots of unity.

First we have
\begin{equation*}
\begin{split}
\tau^na & =A^n(a_j)^T+\sum_{i=0}^{n-1}A^i(\a_j)^T\\
 & = P^{-1}B^nP(a_j)^T+P^{-1}(\sum_{i=0}^{n-1}B^i)P(\a_j)^T.
\end{split}
\end{equation*}
Since $B$ is a upper triangular matrix, and all the diagonal elements of $B$ is roots of unity, there is $b$ such that $B^{b}$ is a upper triangular matrix with all the diagonal elements is $1$. Denote $C=B^{b}$. Now for $n=tb+r$, $0\leq r\leq b-1$,
$$\tau^na=P^{-1}C^tB^rP(a_j)^T+P^{-1}(\sum_{i=0}^{b-1}B^i)(\sum_{k=0}^{t-1}C^k)P(\a_j)^T+P^{-1}(\sum_{i=0}^{r-1}B^i)C^tP(\a_j)^T.
$$
For the fix $r$, we consider if $\rho(a,\tau^{tb+r}a)<\ep$. Let $C=I+D$, here $D$ is strictly upper triangular matrix, $D^d=0$. Then $C^t=\sum_{i=0}^{d-1}\frac{t!}{i!(t-i)!}D^i$. That means each element of $C^t$ is a polynomial of $t$, since $r$ is fixed, each element of $P^{-1}C^tB^rP$, $P^{-1}(\sum_{i=0}^{b-1}B^i)(\sum_{k=0}^{t-1}C^k)P$, $P^{-1}(\sum_{j=0}^{r-1}B^j)C^tP$ is a polynomial of $t$; that means each coordinate of $\tau^{tb+r}a$ is a polynomial of $t$ with degree no more than a fix $d\in \N$:
$$\tau^{tb+r}a=(f_{rj}(t))_{1\leq j\leq d}.$$
And all $f_{rj}$ are no more than $d$ degree. So for $N_{rj}=\big{\{}t,|\{f_{rj}(t)\}-a_j|<\ep\big{\}}$, here $\{f_{rj}(t)\}=f_{rj}(t)\ ({\rm mod} 1)$,
$$N(a,\tau,\ep)=\bigcup_{r=0}^{b-1}(\bigcap_{j=1}^m\{tb+r,t\in N_{rj}\}).$$
The proof is completed.
\end{proof}

\subsection{Nilpotent Lie groups}

Now we begin to prove Theorem \ref{torus,Lie group,compact abelian group nilpotent} for the nilpotent Lie group case.
\begin{prop}\label{linear map}
Let \(A\) be a continuous surjective homomorphism of $k$-step nilpotent Lie group \(G\), and let $G_i=[G_{i-1},G]$ with $G_0=G$. Then $A$ induces naturally surjective homomorphisms \(A_i\):
$$A_i:G_i/G_{i+1}\rightarrow G_i/G_{i+1},$$
and \(A_i\) is invertible linear mapping.

We call \(\{A_1,A_2,\ldots, A_k\}\) the {\em linear part} of \(A\).
\end{prop}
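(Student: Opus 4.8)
The plan is to proceed in three stages: first show that $A$ preserves the lower central series, then descend $A$ to each successive quotient $G_i/G_{i+1}$, and finally identify those quotients with vector spaces so that the induced maps are genuinely linear and invertible. Throughout I take $G$ to be connected and simply connected, as is standard for the nilmanifolds considered in this paper.

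First I would verify that $A(G_i)=G_i$ for every $i$, by induction on $i$. The base case is $A(G_0)=A(G)=G$, which is exactly the surjectivity hypothesis. For the inductive step, recall that $G_i=[G_{i-1},G]$ is generated by commutators $[g,h]$ with $g\in G_{i-1}$, $h\in G$, and that any homomorphism satisfies $A([g,h])=[A(g),A(h)]$. Assuming $A(G_{i-1})=G_{i-1}$, the inclusion $A(G_i)\subseteq G_i$ follows since $[A(g),A(h)]\in[G_{i-1},G]=G_i$; the reverse inclusion uses surjectivity, for if $g=A(g')$ with $g'\in G_{i-1}$ and $h=A(h')$ with $h'\in G$, then $[g,h]=A([g',h'])\in A(G_i)$. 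Continuity of $A$, together with the fact that the terms of the lower central series of a connected Lie group are closed connected subgroups, then lets me pass from the algebraically generated subgroup to $G_i$ itself.

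Once $A(G_i)=G_i$ and $A(G_{i+1})=G_{i+1}$, the map $A$ descends to a well-defined continuous surjective homomorphism $A_i(gG_{i+1})=A(g)G_{i+1}$ on $G_i/G_{i+1}$, where well-definedness is immediate from $A(G_{i+1})=G_{i+1}$. For linearity and invertibility I would pass to the Lie algebra $\mathfrak{g}$: since $\exp$ is a diffeomorphism, each quotient $G_i/G_{i+1}$ is a vector group isomorphic to $\mathfrak{g}_i/\mathfrak{g}_{i+1}$, and $A$ corresponds to the Lie algebra endomorphism $A_*=dA$. Because $A$ is surjective and $\exp$ is a diffeomorphism, $A_*$ is a surjective endomorphism of the finite dimensional space $\mathfrak{g}$, hence an automorphism; it preserves each $\mathfrak{g}_i$ and induces an invertible linear map on $\mathfrak{g}_i/\mathfrak{g}_{i+1}$, which is precisely $A_i$ under the identification. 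Alternatively one can avoid the Lie algebra entirely by observing that $A_i$ is a continuous group endomorphism of $G_i/G_{i+1}\cong\R^{d_i}$ with $d_i=\dim(\mathfrak{g}_i/\mathfrak{g}_{i+1})$, hence automatically $\R$-linear.

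The genuinely substantive point, and the only place where more than routine bookkeeping is needed, is the invertibility of $A_i$. Surjectivity of $A_i$ is cheap, but injectivity is not a formal consequence of $A$ being merely surjective; it comes precisely from finite dimensionality, either via the fact that a surjective linear self-map of $\R^{d_i}$ is bijective, or via $A_*$ being a surjective, hence bijective, endomorphism of $\mathfrak{g}$. I would therefore organize the write-up so that this finite dimensional linear-algebra fact is isolated and clearly invoked, with the preceding steps (preservation of the series, descent to quotients, and identification with vector groups) serving as the setup.
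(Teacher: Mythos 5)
Your proof is correct, but there is essentially nothing in the paper to compare it against: the paper states this proposition with no proof at all, and inside the proof of Proposition \ref{finite degree of homomorphism of nilpotent group} simply declares that ``it is clear that for each $G_i$, if $A$ is surjective, $AG_i=G_i$'', reading the linearity of $A_i$ off the coordinates $\vec{x}_i$ in the unipotent upper-triangular matrix model it adopts right after the statement. So your write-up supplies an argument the paper omits, and its structure (induction along the lower central series using $A([g,h])=[A(g),A(h)]$ together with surjectivity; descent to $G_i/G_{i+1}$; identification of that quotient with a vector group; finite-dimensionality to upgrade surjectivity to invertibility) is the natural, model-free one. Two remarks. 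First, your added hypothesis that $G$ is connected and simply connected is not merely convenient but essential: on the connected, non-simply-connected $1$-step group $G=\T$, the map $x\mapsto 2x$ is a continuous surjective homomorphism whose induced map $A_1$ is surjective but not invertible, so the proposition as literally stated is false without such a hypothesis; the paper implicitly guarantees it by working with unipotent matrix groups, which are simply connected. Second, a small simplification: since the paper defines $G_i$ as the abstract subgroup spanned by commutators, the reverse inclusion $G_i\subseteq A(G_i)$ needs no topology at all --- $A(G_i)$ is a subgroup containing every generator $[g,h]=A([g',h'])$ of $G_i$ --- so your appeal to closedness of the terms of the lower central series can be dropped; it is only needed if one takes $G_i$ to be the closed subgroup generated by commutators, in which case one also invokes the fact that the algebraic and closed versions coincide in a simply connected nilpotent Lie group.
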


 Since each nilpotent Lie group can be imbedded into a upper triangular matrix group, in this paper, we only analysis the matrix nilpotent Lie group.\footnote{We can see this conclusion from Ado theorem. Also we know that each $G/\Gamma$ can be seen as a submanifold of $\hat{G}/\hat{\Gamma}$ such that $\hat{G}$ is connect Lie group. Then we use the linear property of a Lie algebra and transformation between Lie algebra and Lie group to see this conclusion.}
We could write a element of $k$-step matrix nilpotent Lie group G as:
\[\begin{pmatrix}
1    & x_{1,1} & x_{2,1} & ... & x_{k-1,1}& x_{k,1}  \\
0    & 1       & x_{1,2} & ... & ...      &x_{k-1,2} \\
0    & 0       & 1       & ... & ...      & ...      \\
...  & ...     & ...     & ... & ...      & ...      \\
0    & ...     & ...     & ... & 1        & x_{1,k}  \\
0    & ...     & ...     & ... & 0        & 1
\end{pmatrix}
\]
In the following part of this paper, we write a element of G as \((x_{i,j})_{1\leq i \leq k,1\leq j \leq k+1-i}\), or just write it as \((x_{i,j})_{i,j}\) if there is no confusion. Group operation is the matrix operation.
And we write the element of \(G_i/G_{i+1}\) as \((x_{i,1},x_{i,2}...x_{i,k+1-i})\) or \(\overrightarrow{x_i}\).
Here we would define a new kind of degree of a polynomial. The polynomial is define on the matrix elements of of a element of nilpotent Lie group $G$.

\begin{de}
For \((x_{i,j})_{1\leq i \leq k,1\leq j \leq k+1-i}\), let $$W=\{\prod_{1\leq i \leq k,1\leq j \leq k+1-i}x_{i,j}^{\a_{i,j}}:\a_{i,j}\in \N\}.$$ And define $L$-$polynomial\;degree$ \(l:\;W\rightarrow\N\) such that
$$l\Big(\prod_{1\leq i \leq k,1\leq j \leq k+1-i}x_{i,j}^{\a_{i,j}}\Big)=\sum_{1\leq i \leq k,1\leq j \leq k+1-i}2^{i-1}\a_{i,j}$$
If $f$ is a constant, $l(f)=0$. For the polynomial \(f\) defined on the terms of \((x_{i,j})_{1\leq i \leq k,1\leq j \leq k+1-i}\), \(f=c_1w_1+...+c_tw_t,\;c_i\in\R,\;w_i\in W\), we define \(l(f)=\max_{i=1,2...t}{l(w_i)}\). Also it is clear that $l(fg)\leq l(f)+l(g)$ and $l(f+g)\leq \max\{l(f),l(g)\}$.
\end{de}

Here we give the conclusion of matrix nilpotent Lie group G.
\begin{prop}\label{finite degree of homomorphism of nilpotent group}
Let $G$ be a k-step matrix nilpotent Lie group. $A$ is a surjective continuous homomorphism of G. For any \(g=(x_{i,j})_{1\leq i \leq k,1\leq j \leq k+1-i}\), \(A((x_{i,j})_{i,j})=(\varphi_{i,j}(g))_{i,j}\), we have:
\begin{equation}\label{equation of nilLigroup}
\varphi_{i,j}(g)=A_{i,j}(x_{i,1},x_{i,2}...x_{i,k+1-i})+f_{i,j}(x_{1,1},...x_{1,k},x_{2,1}...x_{2,k-1}...x_{i-1,1}...x_{i-1,k+2-i})\;\;
\end{equation}
where \(A_{i,j}\) is linear functional, $f_{i,j}$ is a polynomial such that its L-polynomial degree \(l(f_{i,j})\) is no more than \(2^{i-1}\).
\end{prop}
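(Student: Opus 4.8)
The plan is to run an induction on the superdiagonal index $i$, feeding on two facts: that $A$ respects the lower central series, and that the superdiagonal-$i$ entry of a matrix product is assembled from products of lower superdiagonal entries whose levels sum to $i$. First I would record the structural skeleton. Each $G_i=[G_{i-1},G]$ is characteristic, so $A(G_i)\subseteq G_i$ and $A$ descends to the quotients; on the vector group $G_{i-1}/G_i$, whose coordinates are exactly the entries $x_{i,1},\dots,x_{i,k+1-i}$ of the $i$-th superdiagonal, it induces the invertible linear map $A_i$ of Proposition \ref{linear map}. Passing to $\bar G=G/G_i$ kills all superdiagonals $>i$, and there the superdiagonal-$i$ part of $A(g)$ can depend only on the superdiagonals $\le i$ of $g$; hence $\varphi_{i,j}$ is a function of $\{x_{i',j'}:i'\le i\}$ alone.

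To peel off the linear part I would use that the $i$-th superdiagonal is \emph{central} in $\bar G$, since $[G_{i-1},G]=G_i$. Writing $\bar g=z\,w$ with $z$ carrying only the $i$-th superdiagonal and $w$ carrying the superdiagonals $<i$, centrality gives $\bar A(\bar g)=A_i(z)\,\bar A(w)$. In the matrix product $A_i(z)\,\bar A(w)$ the factor $A_i(z)$ has nonzero entries only on the $i$-th superdiagonal (and $1$'s on the diagonal), so any genuine cross term would land on a superdiagonal $>i$ and die in $\bar G$; thus the two superdiagonal-$i$ contributions simply add. This produces exactly $\varphi_{i,j}(g)=A_{i,j}(x_{i,1},\dots,x_{i,k+1-i})+f_{i,j}$, where $A_{i,j}$ is the linear functional cut out by $A_i$ and $f_{i,j}$, being the superdiagonal-$i$ entry of $\bar A(w)$, depends only on the superdiagonals $1,\dots,i-1$ of $g$.

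Next I would bound the $L$-degree by induction on $i$. Expanding $A(gh)=A(g)A(h)$ and reading off the superdiagonal-$i$ entry yields the cocycle relation
\begin{equation*}
\varphi_{i,j}(gh)=\varphi_{i,j}(g)+\varphi_{i,j}(h)+\sum_{p+q=i,\;p,q\ge 1}\varphi_{p,a}(g)\,\varphi_{q,b}(h),
\end{equation*}
whose correction involves only image entries $\varphi_{p,\cdot},\varphi_{q,\cdot}$ with $p,q<i$. The base case $i=1$ is the abelianization: $A_1$ is linear and $f_{1,j}$ is constant, so $l\le 1=2^{0}$. For the step, the induction hypothesis gives $l(\varphi_{p,\cdot})\le 2^{p-1}$, so by submultiplicativity $l(fg)\le l(f)+l(g)$ each product term has $L$-degree at most $2^{p-1}+2^{q-1}$. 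The weight $2^{i-1}$ assigned to the $i$-th superdiagonal is tailored precisely for this: the super-additivity $2^{p-1}+2^{q-1}\le 2^{p+q-1}=2^{i-1}$ holds for all $p,q\ge 1$, so every term entering $\varphi_{i,j}$ through the identity has $L$-degree $\le 2^{i-1}$, closing the induction. Equivalently, one may fix once and for all a factorization $g=\prod_{(p,q)}\exp\bigl(c_{p,q}(g)E_{p,q}\bigr)$ into one-parameter subgroups with $l(c_{p,q})\le 2^{p-1}$, apply the Lie-algebra map $E_{p,q}\mapsto dA(E_{p,q})\in\mathfrak g_{\ge p}$, and multiply out: the superdiagonal-$i$ entry is a sum of products of factors on levels summing to $i$, and the same partition inequality gives the bound directly, without solving any functional equation.

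The step I expect to be the main obstacle is this last one: controlling the $L$-degree while actually recovering $f_{i,j}$. One must verify that reconstructing the superdiagonal-$i$ entry from the lower data does not inflate the degree, that no superdiagonal-$i$ variable sneaks back into $f_{i,j}$, and that the bookkeeping of which products $\varphi_{p}\varphi_{q}$ occur (governed by the matrix grading, where a factor on superdiagonal $p$ times one on superdiagonal $q$ lands on superdiagonal $p+q$) is done consistently. It is exactly here that the centrality observation of the second paragraph and the super-additivity $2^{p-1}+2^{q-1}\le 2^{i-1}$ carry the argument; everything else is the routine expansion of the homomorphism identity together with the linear-algebraic description of the $A_i$.
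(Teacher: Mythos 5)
Your proposal is, in its complete form, essentially the paper's own proof: the paper likewise reduces to superdiagonals at most $i$ (its identity $\varphi_{i,j}(g)=\varphi_{i,j}(h_i(g))$), splits off the linear part using $A(G_i)=G_i$, factors $g$ into single-superdiagonal and then single-coordinate pieces $\gamma_r=e_r^{x_r}$, computes $A(\gamma_r)=(A(e_r))^{x_r}$ to see that its entries are polynomials of controlled degree, and reassembles everything through the product expansion, with exactly your weight inequality $2^{p-1}+2^{q-1}\le 2^{p+q-1}$ doing the bookkeeping. Two of your choices are actually cleaner than the paper's: the centrality argument isolating $A_{i,j}$ justifies what the paper dismisses as ``clear,'' and your statement that the factorization coordinates $c_{p,q}(g)$ are polynomials of $L$-degree at most $2^{p-1}$ (rather than literally the entries $x_{p,q}$) is more accurate than the paper's claim $g=1(g)2(g)\cdots k(g)$, which on superdiagonals $\ge 3$ only holds after correction terms of precisely this kind.

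There is, however, one genuine caveat: the first version of your degree bound --- induction on $i$ through the cocycle relation alone --- does not close as stated. The right-hand side of
\begin{equation*}
\varphi_{i,j}(gh)=\varphi_{i,j}(g)+\varphi_{i,j}(h)+\sum_{p+q=i,\;p,q\ge 1}\varphi_{p,a}(g)\,\varphi_{q,b}(h)
\end{equation*}
still contains the level-$i$ functions $\varphi_{i,j}(g)+\varphi_{i,j}(h)$, about which the induction hypothesis (which concerns levels $<i$ only) says nothing: the identity controls the multiplicative defect of $\varphi_{i,j}$, not $\varphi_{i,j}$ itself, so ``every term entering through the identity'' does not account for these two terms, and no bound on $l(\varphi_{i,j})$ follows. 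To bound $\varphi_{i,j}$ one must know its values on a generating family, and that is exactly what your ``equivalent'' formulation supplies: on a one-parameter factor, $dA(E_{p,q})\in\mathfrak{g}_{\ge p}$ forces the superdiagonal-$i$ entry of $\exp\bigl(c\,dA(E_{p,q})\bigr)$ to be a polynomial in $c$ of degree at most $\lfloor i/p\rfloor$, hence of $L$-degree at most $2^{p-1}\lfloor i/p\rfloor\le 2^{i-1}$; the product expansion and the partition inequality then transport the bound to general $g$. So the factorization computation must be presented as the proof, with the cocycle relation serving only as the assembling device --- which is exactly how the paper organizes its argument (its ``Claim'' about $\varphi_{i,j}(s(g))$, proved via $A(e_r^{x_r})=(A(e_r))^{x_r}$, is this evaluation step).
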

\begin{rem}
We might use Lie algebra to prove a similar proposition. But to make the degree more exact, we use a complex proof.
\end{rem}
\begin{proof}
It is clear that for each \(G_i\), if \(A\) is surjective, \(AG_i=G_i\). Then the first part of (\ref{equation of nilLigroup}) is clear.(i.e. $A_{i,j}$ is linear.)

 Now we would prove that \(f_{i,j}\) is polynomial with \(l(f)\leq 2^{i-1}\). Because \(A\) is homomorphism, \(A(gg')=A(g)A(g')\). we have:
 \[(\varphi_{i,j}(g))(\varphi_{i,j}(g'))=(\varphi_{i,j}(gg')).\]
 That means
\begin{equation}\label{proof of nilLigroup}
\varphi_{i,j}(gg')=\Sigma_{s=0}^{i}\varphi_{s,j}(g)\varphi_{i-s,s+j}(g'). \;\;\;\;  \end{equation}

where $\varphi_{0,j}(g)=1$.

  Let $\{1,\ldots,k\}$ act on $G$ such that $t(g)$ be the matrix in \(G\) and
  $$t(g)=(x_{i,j,(t)})_{i,j} \;\;x_{t,j,(t)}=x_{t,j},\;\;x_{i,j,(t)}=0 \;\;\text{if} \;\;i\neq t.$$
 we have $t(g)\in G_t$, $g=1(g)2(g)...k(g)$, and let \(h_t(g)=1(g)...t(g),\;\;h_0(g)=I\). Since \(AG_i=G_i\), $\varphi_{i,j}(t(g))=0$ if \(t>i\geq 1\). Then (\ref{proof of nilLigroup}) gives the result that for \(i>0\) $$\varphi_{i,j}(h_{t+1}(g))=\Sigma_{s=0}^{i}\varphi_{s,j}(h_t(g))\varphi_{i-s,s+j}((t+1)(g))
 =\varphi_{i,j}(h_t(g))\;\;if\;\;t\geq i .$$
Which means $\varphi_{i,j}(g)=\varphi_{i,j}(h_i(g))$. And we have:
\begin{equation} \label{proof of nilLigroup 2}
 \begin{split}
   & \quad \ \varphi_{i,j}(h_i(g))\\ & = \Sigma_{s=0}^{i}\varphi_{s,j}(h_{i-1}(g))\varphi_{i-s,s+j}(i(g))\\
     & =\varphi_{i,j}(h_{i-1}(g))+\varphi_{i,j}(i(g))\\
     & =\Sigma_{s=0}^{i}\varphi_{s,j}(h_{i-2}(g))\varphi_{i-s,s+j}((i-1)(g))+\varphi_{i,j}(i(g))\\
     & =\varphi_{i,j}(h_{i-2}(g))+\varphi_{i-1,j}(h_{i-2}(g))\varphi_{1,j}((i-1)g)+\varphi_{i,j}((i-1)(g))+\varphi_{i,j}(i(g))\\
     &= \ldots\\
     &=\Sigma_{s=1}^i\varphi_{i,j}(s(g))+\Sigma_{s=1}^{i-1}\Sigma_{r=1}^{s}\varphi_{i-r,j}(h_{i-1-s}(g))\varphi_{r,j-r}((i-s)(g)).
 \end{split}
 \end{equation}
For \(i=1\), \(l(f_{1,j})\leq 1\) since $G/G_1$ is a linear space.  (\ref{equation of nilLigroup}) is right for \(i=1\). Now we make the induction for \(1,2,\ldots ,i-1\). i.e. for $s\leq i-1$,
$$l(\varphi_{s,j}(g))=l(\varphi_{s,j}(h_s(g)))\leq 2^{s-1}$$
By the induction, $l(\varphi_{i-r,j}(h_{i-1-s}(g))\varphi_{r,j-r}((i-s)(g)))\leq 2^{i-r-1}*2^{r-1}\leq 2^{i-1}$.

Claim: \(\varphi_{i,j}(s(g))\) is a polynomial on $x_{i,j}$ with \(l(\varphi_{i,j}(s(g)))\leq 2^{i-1}\), where \(s\leq i\).

 If the claim is right, \black according to and (\ref{proof of nilLigroup 2}), \(\varphi_{i,j}(h_{i-1}(g))\) satisfies \(l(\varphi_{i,j}(h_{i-1}(g)))\leq 2^{i-1}\). Then $l(\varphi_{i,j}(g))\leq 2^{i-1}$. Hence this proposition is right.

Now we prove the claim. For a fix $s$,
we denote $(y_1,y_2,\ldots,y_{x+1-s})_{s}$ being the element $\hat{g}=(\hat{x}_{i,j})_{i,j}\in G$ such that
$$(\hat{x}_{s,1},\ldots,\hat{x}_{s,k+1-s})=(y_1,y_2,\ldots,y_{k+1-s})\ and \ x_{i,j}=0\ for\ i\neq s.$$
According to the operation of matrix, $(0,1,0,\ldots,0)_{s}*(1,0,\ldots,0)_{s}=(1,1,0,\ldots,0)_{s}$, here each coordinate corresponding a matrix. What is more, we have:
$$(y_1,y_2,\ldots,y_{k+1-s})_s=(0,\ldots,0,y_{k+1-s})_s*(0,\ldots,0,y_{k,s},0)_s\ldots,(y_1,0,\ldots,0)_s .$$
For $1\leq s\leq k$, denote \(\gamma_{r,s}=(0,\ldots,0,x_r,0,\ldots,0)_s\in G\) corresponding to $g=(x_{i,j})_{i,j}\in G$. Then $s(g)=\gamma_{k+1-s,s}\ldots \gamma_{1,s}$.
By (\ref{proof of nilLigroup}),
$$\varphi_{i,j}((x_1,x_2,\ldots,x_{k+1-s})_s)=
\Sigma_{r=0}^{i}\varphi_{r,j}(\gamma_{k+1-s})\varphi_{i-r,j+r}((x_1,x_2,\ldots,x_{k-s},0)_s).$$

According to the induction, if we want to prove $l(\varphi_{i,j}((x_1,x_2,\ldots,x_{k+1-s})_s))\leq 2^{i-1}$, we only need to prove
$$l(\varphi_{i,j}((x_1,x_2,\ldots,x_{k-s},0)_s)\leq 2^{i-1}$$
and
$$l(\varphi_{i,j}(\gamma_{k+1-s}))\leq 2^{i-1}.$$
\black
We use (\ref{proof of nilLigroup}) again and again for $s(g)=\gamma_{k+1-s,s}\ldots \gamma_{1,s}$, then we just need to prove $l(\varphi_{i,j}(\gamma_r))\leq 2^{i-1}$ for each $r$. Let \(e_r=(0,\ldots,0,1,0,\ldots,0)_s\). Then $e_r$ generated a one-dim sub Lie group of $G$. We know $e_r^n=ne^r$ for positive integer $n$, which means
 $$A(ne_r)=A(e_r^n)=(A(e_r))^n.$$
 $\gamma_r=e_r^{x_r}$, where $e_r^{x}$ is defined by $q_n\in \mathbb{Q},q_n\rightarrow x$ such that $e_r^{x}=\lim_{n\rightarrow \infty}e_r^{q_n}$, and $e_r^{\frac{1}{t}}$ is the matrix $C$ satisfying $C^t=e_r$. For $e_r\in G$, the matrix $C$ is unique.

Then since $A$ is continuous, $(\varphi_{i,j}(\gamma_r))_{i,j}=A(e_r^{x_r})=(Ae_r)^{x_r}$. Then for $(Ae_r)^{x_r}$, since $B=A(e_r)=(b_{i,j})_{i,j}\in G$, then each matrix element of $B^{x_r}$ is a function of $x_r$. What is more, since the diagonal element of $B$ is $1$, $\varphi_{i,j}(\gamma_r)$ is a polynomial of $x_r$. Also $b_{t,j}=0$ for $1\leq t< s$. Since $A(G_s)\subset G_s$, we know that the degree $x_r$ in the polynomial $\varphi_{i,j}(\gamma_r)$ of $x_r$ is no more than $2^{i-s}$, which means that $l(\varphi_{i,j}(\gamma_r))\leq 2^{i-1}$.\black

So the claim is right. The whole proof is completed.
\end{proof}

Now for the nilpotent manifold $G/\Gamma$ and its zero entropy affine transformation, we prove Theorem \ref{torus,Lie group,compact abelian group nilpotent}.

For an affine transformation of $G/\Gamma$, $\tau: g\rightarrow \a A(g)$ with $\a\in G$ and $A$ being an automorphism of $G/\Gamma$, which induced by a homomorphism $A$ of $G$. $A$ also keep $\Gamma$. $\tau$ has zero entropy if and only if $A$ has zero entropy. In this case, each $A_i$ in Proposition \ref{linear map} is zero entropy (each $A_i$ also induce a automorphism on $G_i/\Gamma_i$).

And for each $i$, we have $A_i=Q_i^{-1}B_iQ_i$, where $B_i$ is a upper triangular matrix and all the diagonal elements of $B_i$ are roots of unity. Also there is a matrix $P$ such that $B=P^{-1}AP$ with $B_i$ is the linear part of $B$. Let $b$ be the integer such that all the diagonal elements of $B_i^{b}$ is $1$ for each $i$. Let $C=A^{b}$. Then for $n$, we have $n=tb+r$ and $A^n=C^tA^r$ where $0\leq r \leq b-1$.

\subsection{Proof of Theorem \ref{torus,Lie group,compact abelian group nilpotent}}

We need a metric of $G/\Gamma$ which can generate the topology. One simple metric $\rho$ is the maximum distance such that for $g\Gamma,g'\Gamma\in G/\Gamma$ with $g=(x_{i,j})_{i,j}$ and $g'=(x'_{i,j})_{i,j}$ $x_{i,j},x'_{i,j}\in [0,1)$, $\rho(g\Gamma,g'\Gamma)=\max_{i,j}|x_{i,j}-x'_{i,j}|$. For each $g\Gamma\in G/\Gamma$, we only need to consider the open neighborhood $U_\ep=\{h+\Gamma,\rho(h\Gamma,g\Gamma)<\ep\}$.

We consider the affine transformation $\tau(g\Gamma)=g_0A(g\Gamma) $ on $G/\Gamma$, where $A$ is a zero entropy homomorphism on $G$(it induce a map on $G/\Gamma$)and fix element $g_0\in G$. Also $A$ satisfies $A\Gamma\subset \Gamma$. Here we denote $g_0=(y_{i,j})_{i,j}$.

Let $b$ be the integer such that all $A_i$(the linear part of $A$) satisfies that the eigenvalues of $A_i^b$ are $1$(as the discussion in torus case).

 Claim: Each matrix element of $A^{nb+r}(g)=(\phi_{i,j}(n,g))_{i,j}$ is a finite degree polynomial of $n$. The degree of $\phi_{i,j}(n,g)$ is less than some $M$, which is independent of $g$.

The claim is right since Proposition \ref{finite degree of homomorphism of nilpotent group} and $A^{nb+r}(g)=(A^b)^{n}(g)A^{r}(g)$. We know that $B=A^b$ is still a group automorphism. If we write $B(g)=(\varphi_{i,j}(g))_{i,j}$, then
$$\varphi_{i,j}(g)=B_{i,j}(x_{i,1},x_{i,2}...x_{i,k+1-i})+h_{i,j}(x_{1,1},...x_{1,k},x_{2,1}...x_{2,k-1}...x_{i-1,1}...x_{i-1,k+2-i})$$
Where $A_i^b=B_i=(B_{i,1}\ldots,B_{i,k+1-i})$, which means that $B_i$ is the matrix with all eigenvalues being 1. We write $h_{i,j}(x_{1,1},...x_{1,k},x_{2,1}...x_{2,k-1}...x_{i-1,1}...x_{i-1,k+2-i})=H_{i,j}(g),$ $(x_{i,1},x_{i,2}...x_{i,k+1-i})=\vec{x}_i$, $\vec{H}_i(g)=(H_{i,1}(g),\ldots,H_{i,k+1-i}(g))$. Then
$$(\phi_{i,1}(n),\ldots,\phi_{i,k+1-i}(n))=B_i^n(\vec{x}_i)+\sum_{j=0}^{n-1}B_i^{n-1-j}(\vec{H}_i(B^jg))$$
We make an induction for $i$ to show that $\phi_{i,j}(n)$ are polynomials of $n$.

$i=1$ is clear since the tori case. Now we assume that for $1\leq t\leq i-1$, $\phi_{t,j}(n)$ are polynomials of $n$. Then $H_{i,j}(B^mg)$ are polynomials of $m$ since $h_{i,j}(*)$ are polynomials and $H_{i,j}(B^mg)$ is a polynomial of $\phi_{t,j}(m)$ with $1\leq t\leq i-1$.

Also if $\vec{z}=(z_1,\ldots,z_{k+1-i})$ is fixed, each coordinate of $B_i^m(\vec{z})$ are polynomials of $m$ since the discussion of tori. what is more, for fix $m$, each coordinate of  $B_i^m(z_1,\ldots,z_{k+1-i})$ is a linear combination of $z_1,\ldots,z_{k+1-i}$. That means, each coordinate of $B_i^{n-1-j}(\vec{H}_i(B^jg))$ are polynomials of $n$ and $j$. Then each coordinate of $B_i^{-j}(\vec{H}_i(B^jg))$ are polynomials of $j$, we write it as $B_i^{-j}(\vec{H}_i(B^jg))=(v_1(j),\ldots,v_{k+1-i}(j))$. Where $v_s(j)$ are polynomials of $j$.

Then $$\sum_{j=0}^{n-1}B_i^{n-1-j}(\vec{H}_i(B^jg))=B_i^{n-1}(\sum_{j=0}^{n-1}v_1(j),\ldots,\sum_{j=0}^{n-1}v_{k+1-i}(j))$$
Since for arbitrary fix $m\in \N$, $\sum_{j=0}^{n-1}j^m$ are polynomials of $n$. Then $\sum_{j=0}^{n-1}v_s(j)$ are polynomials of $n$ for $1\leq s \leq k+1-i$. Then we have $\phi_{i,j}(n)$ are polynomials of $n$ for each $j$. That is the claim.

Now we consider the zero entropy affine $\tau$.

$$\tau^{nb+r}(g\Gamma)=A(g_0)\ldots A^{nb+r-1}(g_0)A^{nb+r}(g_0)A^{nb+r}(g)\Gamma.$$
We consider $A(g_0)\ldots A^{nb+r-1}(g_0)A^{nb+r}(g_0)A^{nb+r}(g)$. Notice that $A^{m}(g_0)$ is a upper triangle matrix with the dialog element being $1$.The claim tells us that

$A^{(n-1)b+r+1}(g_0)\ldots A^{nb+r}(g_0)$ is a upper triangle matrix with diagonal element being 1 that each matrix element of it is a finite degree polynomial of $n$. Then according to matrix operation, we know that each matrix element of
$A(g_0)\ldots A^{nb+r-1}(g_0)A^{nb+r}(g_0)$ is a finite degree polynomial of $n$.

Now we denote $A(g_0)\ldots A^{nb+r-1}(g_0)A^{nb+r}(g_0)A^{nb+r}(g)=(p_{i,j,r}(n))_{i,j}$, where each $p_{i,j,r}(n)$ is a polynomial with degree less than some $M_0$ independent with $g$. We want to find $g_{n,r}\Gamma=(p_{i,j,r}(n))_{i,j}\Gamma$ such that $g_n=(q_{i,j,r}(n))_{i,j}$ with $q_{i,j,r}(n)\in [0,1)$.

For $i=1,$ $q_{1,j,r}(n)=p_{1,j,r}(n)(\mod 1)$. We want to find $\gamma_{n,r}=(y_{i,j,r}(n))_{i,j}\in \Gamma\subset M_{k+1}(\Z)$ such that $g_n\gamma_{n,r}=(p_{i,j,r}(n))_{i,j}$, that means
$$p_{i,j,r}(n)=\sum_{s=0}^iq_{s,j,r}(n)y_{i-s,s+j,r}(n).$$
Here $q_{0,j',r}=1,y_{0,j',r}=1$ for all $j'$. Then $y_{1,j'}=[p_{1,j'}]$ are finite degree generalized polynomial for all $j'$.

Inductively, we assume $q_{1,j',r},\ldots,q_{i-1,j',r},\gamma_{1,j',r},\ldots,\gamma_{i-1,j',r}$ are generalized polynomial with finite degree for all $j'$. Then
$$q_{i,j,r}+y_{i,j,r}=p_{i,j,r}-\sum_{s=1}^{i-1}q_{s,j,r}y_{i-s,s+j,r}$$
Since $\gamma_{n,r}\in \Gamma$,
$$y_{i,j,r}=[p_{i,j,r}-\sum_{s=1}^{i-1}q_{s,j,r}y_{i-s,s+j,r}]$$
while $$q_{i,j,r}=\{p_{i,j,r}-\sum_{s=1}^{i-1}q_{s,j,r}y_{i-s,s+j,r}\}$$
are generalized polynomials with finite degree independent with $g$.

That is what we want:
 $$N(g\Gamma,U_{\ep})=\bigcup_{r=1}^b(\bigcap_{i,j}\{nb+r :|q_{i,j,r}(n)-q_{i,j,r}(0)|<\ep\})$$
Then the theorem is right for matrix nilmanifold.

\subsection{Proof of Theorem \ref{iso of torus and nilmanifold}}

The integer $b$ is given earlier. For each $1\leq r\leq b$, $(\overline{orb(\tau^rg,\tau^b)},\tau^b)$ is a dynamical system. Also we have $\overline{orb(g,\tau)}=\bigcup_{1\leq r\leq b}\overline{orb(\tau^rg,\tau^b)}$. The reason is, for each $x\in \overline{orb(g,\tau)}$, if $x_j\rightarrow_{j\in J} x$ with $x_j\in orb(g,\tau)$, then there is $r_x$ such that $\{x_j\}_{j\in J}\cap \overline{orb(\tau^{r_x}g,\tau^b)}$ are infinite set. Let $X_j=\overline{orb(\tau^rg,\tau^b)}$.

According to the proof of Theorem \ref{torus,Lie group,compact abelian group nilpotent} and Theorem \ref{F_d=F_GPd}, $(X_r,\tau^b)$ are systems which are AA system. According to Theorem \ref{nilfactor}, we know that the system is a almost one to one extension of its maximal nilfactor $(Y_r,S_r)$ with factor map $\pi_r$. Since $(X_r,\tau^b)$ is minimal and each point $x\in X_r$ is AA point, it is one to one extension. i.e. $(Y_r,S_r)=(X_r,\tau^b)$.

Now we construct a dynamical system $(Y,S)$. We consider each $Y_r $ as different dynamical systems with national isomorphic map $I_{r}:Y_r\rightarrow Y_{r+1}$. (The map is induced by $\tau:X_r\rightarrow X_{r+1}$). Also $I_d: Y_d\rightarrow Y_1$ is induced by $\tau:X_d\rightarrow X_1$. Let $Y=\bigsqcup_{r=1}^b Y_r$, $S|_{Y_r}=I_r$. Since each $(Y_r,S^b)$ is a system of order $d$, by the property of nilsystem, $(Y,S)$ is a system of order $d$.
Since $(\overline{orb(g,\tau)},\tau)$ is a factor of $(Y,S)$, it is a system of order $d$.

\subsection{Compact abelian groups}\label{section-group}

We also prove a similar but weaker conclusion for abelian compact group. That is:
\begin{thm}\label{compact abelian group main conclusion}
Let $X$ be a abelian compact group, and $\tau: X\rightarrow X, x\mapsto \a xx_0$ be an affine transformation with zero entropy. Where $\a$ is zero entropy automorphism. Then for each $x\in X$, $(\overline(orbit(x,\tau)),\tau)$ is isomorphic to an inverse limit of nilsystems.\end{thm}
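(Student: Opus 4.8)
The plan is to reduce the compact abelian group case to a statement about return times and almost automorphy, as was done for tori and nilmanifolds, but using the structure theory of compact abelian groups (Pontryagin duality) in place of the explicit matrix computations. The key difference is that a compact abelian group need not be a torus or even finite-dimensional, so I cannot diagonalize a single integer matrix; instead I would pass to the dual group $\widehat{X}$, which is a discrete abelian group, and analyze the dual endomorphism $\widehat{\alpha}$.

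First I would recall that $\tau(x)=\alpha x x_0$ with $\alpha$ a zero-entropy automorphism. By Pontryagin duality, $\alpha$ induces an automorphism $\widehat\alpha$ of the discrete group $\widehat X$, and the zero-entropy hypothesis translates (via the classical characterization, e.g. the dual of Theorem \ref{entropy of torus} / the Yuzvinskii–Berg results) into the statement that $\widehat\alpha$ acts ``quasi-unipotently'' on $\widehat X$: every element $\chi\in\widehat X$ lies in a finitely generated $\widehat\alpha$-invariant subgroup on which $\widehat\alpha$ has all eigenvalues roots of unity. The orbit closure $\overline{orb(x,\tau)}$ is then governed by only countably many characters, and for the structural conclusion it suffices to understand each character's behavior along the orbit.

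Next, fixing a point $x$ and a character $\chi\in\widehat X$, I would compute $\chi(\tau^n x)$. Writing $\tau^n x = \alpha^n(x)\,\prod_{i=0}^{n-1}\alpha^i(x_0)$, applying $\chi$ turns the multiplicative orbit into an additive expression $\chi(\alpha^n x) + \sum_{i=0}^{n-1}\chi(\alpha^i x_0)$ in $\T$. Because $\widehat\alpha$ is quasi-unipotent on the finitely generated invariant subgroup containing $\chi$, passing to the power $b$ that kills the root-of-unity part makes $n\mapsto \chi(\tau^{nb+r}x)$ a generalized polynomial of bounded degree in $n$, exactly mirroring the torus argument in Proposition \ref{tori}. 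This shows each return-time set $N(x,U)$ contains a finite disjoint union of sets in $\F_{GP_d}$, hence lies in $\F_{d,0}$ by Theorem \ref{F_d=F_GPd}. Splitting the orbit into the $b$ arithmetic-progression subsystems $(\overline{orb(\tau^r x,\tau^b)},\tau^b)$ as in the proof of Theorem \ref{iso of torus and nilmanifold}, minimality of each piece together with Theorem \ref{main conclusion fo HSY} gives that every point is a $d$-step AA point; since the system is minimal and every point is AA, the almost one-to-one extension of the maximal nilfactor furnished by Theorem \ref{nilfactor} is in fact an isomorphism, so each piece is a system of order $d$ and the orbit system is an inverse limit of nilsystems.

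\emph{The main obstacle} I expect is the absence of a finite-dimensional ambient space: in the torus and nilmanifold cases the degree bound $d$ and the period $b$ came from a single finite matrix, but here $\widehat X$ may be infinitely generated, so a priori neither $b$ nor the polynomial degree is uniform across all characters. This is precisely why the theorem is stated as ``weaker'': I would not claim a single fixed $d$ working simultaneously for the whole group, but rather handle the orbit closure of each point separately, exhaust $\widehat X$ by finitely generated $\widehat\alpha$-invariant subgroups, and realize $\overline{orb(x,\tau)}$ as an \emph{inverse limit} over these finite-dimensional pieces. On each finite-dimensional quotient the torus argument applies verbatim with its own $d$ and $b$, and the inverse limit of systems of order $d$ (over varying $d$) is exactly an inverse limit of nilsystems, which is the desired conclusion. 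Verifying that these finite-dimensional quotients are genuine torus (or solenoidal) factors to which Proposition \ref{tori} applies, and that the factor maps assemble compatibly into the inverse limit, is the delicate bookkeeping step.
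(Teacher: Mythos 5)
Your proposal is correct, and at its core it is the paper's own argument rewritten in Pontryagin-dual language. The paper invokes Schmidt's structure theorem (Property \ref{Lie chain}) to produce a decreasing chain of closed $\alpha$-invariant subgroups $Y_i$ with $\bigcap_i Y_i=\{1_X\}$ and compact Lie quotients; it then notes that each $(X/Y_i,\tau_i)$ is a zero-entropy affine transformation of a compact abelian Lie group to which Theorem \ref{iso of torus and nilmanifold} applies, and that $(\overline{orb(x,\tau)},\tau)$ is the inverse limit of the orbit systems $(\overline{orb(xY_i,\tau_i)},\tau_i)$, each a system of order $d_i$, hence the orbit closure is a system of order $\infty$. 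Your exhaustion of $\widehat X$ by finitely generated $\widehat\alpha$-invariant subgroups is exactly the annihilator-dual of Schmidt's chain (the quotients $X/\Lambda_i^{\perp}\cong\widehat{\Lambda_i}$ are the same compact abelian Lie groups $\T^{n_i}\times F_i$, never solenoids, so your parenthetical worry about solenoidal factors is moot), your character-wise computation of $\chi(\tau^{nb+r}x)$ reproves the finite-dimensional step rather than citing it, and your final observation that an inverse limit over varying $d_i$ is still an inverse limit of nilsystems matches the paper's conclusion. The one weak point is your justification of the quasi-unipotence input: it does \emph{not} follow from the dual of Theorem \ref{entropy of torus} alone, since Sinai's formula only controls finite-dimensional quotients and gives no reason why $\widehat X$ should be exhausted by finitely generated $\widehat\alpha$-invariant subgroups at all; that exhaustion is the real content of the structure theory of zero-entropy automorphisms of compact groups, i.e.\ precisely Schmidt's Property \ref{Lie chain} (equivalently, the Yuzvinskii/Lind--Schmidt--Ward entropy theory of algebraic actions). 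With that result cited explicitly in place of the vague attribution, your argument closes and coincides step for step with the paper's.
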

Now let us consider a compact abelian group $X$ with zero entropy affine transformation.

The affine transformation of $X$ is $\tau: X\rightarrow X$, $\tau(g)=A(g)g_0$. Here $A$ is a continuous automorphism, $g_0\in X$. However, since we don't need the $A$-invariant metric and each system has an invertible fact, we can also use Property \ref{Lie chain} for $A$. And we need a property.
\begin{pro}\cite{Schmidt} \label{Lie chain}
Let $\a$ be an automorphism of a compact group $X$ with zero entropy. Then there exists a decreasing sequence of closed, normal, $\a$-invariant subgroups $$X=Y_0\supset Y_1\supset \ldots \supset Y_n\supset \ldots$$
such that $\bigcap_{i\geq 1}Y_i=\{1_X\}$, and $Y_{i-1}/Y_i$ is a compact Lie group with an $\a^{Y_{i-1}/Y_i}$-invariant metric for every $i\geq 1$.

In particular, there exists no infinite, closed, $\a$-invariant subgroup $Y\subset X$ such that $\a^Y$ is ergodic.
\end{pro}

Proof of {Theorem \ref{compact abelian group main conclusion}}

According to Property \ref{Lie chain}, for a compact abelian group $X$ with zero entropy automorphism $\a$, there exists a decreasing sequence of closed, normal, $\a$-invariant subgroups $$X=Y_0\supset Y_1\supset \ldots \supset Y_n\supset \ldots$$
such that $\bigcap_{i\geq 1}Y_i=\{1_X\}$, and $Y_{i-1}/Y_i$ is a compact Lie group with an $\a^{Y_{i-1}/Y_i}$-invariant metric for every $i\geq 1$.

For the zero entropy affine transformation $\tau$ of $G$, $\tau(x)=\a xx_0$ We notice that for the compact abelian Lie group $(X/Y_i,\a_i)$, where $\a_i: xY_i\rightarrow \a(x)x_0Y_i$ is a zero entropy affine transformation for $X/Y_i$, also the inverse limit of $(X/Y_i,\a_i)$ is $X$.

Also for each $x\in X$, $(\overline{orb(x,\tau)},\tau)$ is the inverse limit of $(\overline{orb(xY_i,\tau_i)},\tau_i)$. According to Theorem \ref{iso of torus and nilmanifold}, we know that each $(\overline{orb(xY_i,\tau_i)},\tau_i)$ is a system of order $d_i$. Then $(\overline{orb(x,\tau)},\tau)$ is a $\infty$-order system.



\end{document}